\newtheorem{theorem}{Theorem}
\newtheorem{definition}[theorem]{Definition}
\newtheorem{lemma}[theorem]{Lemma}
\newenvironment{proof}[1][Proof]{\noindent\textbf{#1.} }{\ \rule{0.5em}{0.5em}}
\begin{document}

\title{\textbf{On the growth of meromorphic solutions of homogeneous and
non-homogeneous linear difference equations in terms of (p,q)-order}}
\author{Chinmay Ghosh$^{1}$, Subhadip Khan$^{2}$, Anirban Bandyopadhyay$^{3}$%
\qquad \\
$^{1}$Department of Mathematics\\
Kazi Nazrul University\\
Nazrul Road, P.O.- Kalla C.H.\\
Asansol-713340, West Bengal, India \\
chinmayarp@gmail.com \\
$^{2}$Jotekamal High School\\
Raghunathganj 2, Murshidabad\\
Pin-742133, West Bengal, India\\
subhadip204@gmail.com\\
$^{3}$53, Gopalpur Primary School\\
Raninagar-I, Murshidabad\\
Pin-742304, West Bengal, India \\
anirbanbanerjee010@gmail.com}
\date{}
\maketitle

\begin{abstract}
In this paper we have studied the growth of meromorphic solutions of higher
order homogeneous and non-homogeneous linear difference equations with
entire and meromorphic coefficients. We have extended and improved some
results of Zhou and Zheng $\left( 2017\right) ,$ Belaidi and Benkarouba $%
\left( 2019\right) $ by using $(p,q)-$order and $(p,q)-$type.

\textbf{AMS Subject Classification }(2010) : 30D35, 39A10, 39B32

\textbf{Keywords and phrases}: entire function, meromorphic function,
homogeneous difference equation, non-homogeneous difference equation, $%
(p,q)- $order, $(p,q)-$type.
\end{abstract}

\section{Introduction}

Recently the properties of meromorphic solutions of complex difference
equations 
\begin{equation}
A_{k}(z)f(z+c_{k})+A_{k-1}(z)f(z+c_{k-1})+\cdots
+A_{1}(z)f(z+c_{1})+A_{0}(z)f(z)=0  \label{1h}
\end{equation}%
and%
\begin{equation}
A_{k}(z)f(z+c_{k})+A_{k-1}(z)f(z+c_{k-1})+\cdots
+A_{1}(z)f(z+c_{1})+A_{0}(z)f(z)=F(z)  \label{1nh}
\end{equation}%
have become a subject of great interest from the view point of Nevanlinna's
theory and achieved many valuable results where the coefficients $%
A_{0},A_{1},\ldots ,A_{k}\neq 0\ $and\ $F\neq 0$ $(k\geq 2)$ in (\ref{1h})
or (\ref{1nh}) are entire or meromorphic functions and $c_{k},c_{k-1},\ldots
,c_{1}$ are distinct nonzero complex numbers.

In $1976$ Juneja, Kapoor and Bajpai \cite{7} introduced the idea of $(p,q)-$%
order of an entire function and in $2010$ Liu, Tu and Shi \cite{10} modified
the definition of the $(p,q)-$order to make it more suitable. Laine and
Yang, in $2007,$ considered the equation\ (\ref{1h}) when more than one
dominant coefficients exist but exactly one has its type strictly greater
than the others (\cite{8}, Theorem 5.2.). In $2008$ Chiang and Feng \cite{3}
investigated meromorphic solutions of (\ref{1h}) and established a theorem (%
\cite{3}, Theorem 9.2) taking exactly one coefficient of (\ref{1h}) with
maximal order$.$ In $2013$, Liu and Mao used hyper order to establish the
case when one or more coefficients of (\ref{1h}) or (\ref{1nh}) having
infinite order (\cite{9}, Theorem 1.4, Theorem 1.6). Finally in $2017$, Zhou
and Zheng (\cite{11}, Theorem 1.5) and in $2019$, Bela\"{\i}di and
Benkarouba (\cite{1}, Theorem 1.1-Theorem 1.4) used iterated order and
iterated type to investigate the solutions of (\ref{1h}) or (\ref{1nh}) and
obtained some results which improved and generalized those previous results.

In this article we use the concept of $(p,q)-$order to investigate
meromorphic solutions of (\ref{1h}) and (\ref{1nh}). We also extend and
improve some results of Zhou and Zheng \cite{11}, Bela\"{\i}di and
Benkarouba \cite{1}. Here we consider both cases, when (\ref{1h}) and (\ref%
{1nh}) have entire coefficients and meromorphic coefficients. We also cover
the cases when, either one of the coefficients have maximal $(p,q)-$order or
more than one coefficients having maximal $(p,q)-$order.

Throughout this paper, we assume that the reader is familiar with the
fundamental results and the standard notations of Nevanlinna's value
distribution theory \cite{6a}.

\section{Definitions and Lemmas}

In this section we give some basic definitions and lemmas which are used to
prove our main results.

For all $r\in \mathbb{R},$ set $\exp _{1}r=e^{r}$ and $\exp _{p+1}r=\exp
\left( \exp _{p}r\right) ,$ $p\in \mathbb{N}.$ Also for all sufficiently
large values of $r,$ $\log _{1}r=\log r$ and $\log _{p+1}r=\log \left( \log
_{p}r\right) ,$ $p\in \mathbb{N}.$ Further $\exp _{0}r=\log _{0}r=r,$ $\exp
_{-1}r=\log _{1}r,$ $\exp _{1}r=\log _{-1}r.$

\begin{definition}
\cite{8a} Let $p\geq q\geq 1$ or $2\leq q=p+1$ be integers. The $(p,q)-$%
order of a transcendental meromorphic function $f$ is defined by%
\begin{equation*}
\rho _{f}\left( p,q\right) =\limsup_{r\rightarrow \infty }\frac{\log
_{p}T(r,f)}{\log _{q}r}.
\end{equation*}

And if $f$ is a transcendental entire function, then%
\begin{equation*}
\rho _{f}\left( p,q\right) =\limsup_{r\rightarrow \infty }\frac{\log
_{p+1}M(r,f)}{\log _{q}r}.
\end{equation*}
\end{definition}

Note that $0\leq \rho _{f}\left( p,q\right) \leq \infty .$ Also for a
rational function $\rho _{f}\left( p,q\right) =0.$

\begin{definition}
\cite{8a} A transcendental meromorphic function is said to have index pair $%
\left[ p,q\right] $ if $0\leq \rho _{f}\left( p,q\right) \leq \infty $ and $%
\rho _{f}\left( p-1,q-1\right) $ is not a non-zero finite number.
\end{definition}

\begin{definition}
\cite{8a} The $(p,q)-$type of a meromorphic function $f$ having non-zero
finite $(p,q)-$order $\rho _{f}\left( p,q\right) $ is defined by%
\begin{equation*}
\tau _{f}\left( p,q\right) =\limsup_{r\rightarrow \infty }\frac{\log
_{p-1}T(r,f)}{\left( \log _{q-1}r\right) ^{\rho _{f}\left( p,q\right) }}.
\end{equation*}

And if $f$ is a transcendental entire function, then%
\begin{equation*}
\tau _{f}\left( p,q\right) =\limsup_{r\rightarrow \infty }\frac{\log
_{p}M(r,f)}{\left( \log _{q-1}r\right) ^{\rho _{f}\left( p,q\right) }}.
\end{equation*}
\end{definition}

\begin{definition}
\cite{8a} Let $p\geq q\geq 1$ or $2\leq q=p+1$ be integers. The $(p,q)-$%
exponent of convergence of the sequence of poles of a meromorphic function $%
f $ is defined by%
\begin{equation*}
\lambda _{\frac{1}{f}}\left( p,q\right) =\limsup_{r\rightarrow \infty }\frac{%
\log _{p}N(r,f)}{\log _{q}r}.
\end{equation*}
\end{definition}

Now we recall that the linear measure of a set $S\subset \left( 0,+\infty
\right) $ is defined by 
\begin{equation*}
m\left( S\right) =\dint\limits_{0}^{\infty }\chi _{S}\left( t\right) dt
\end{equation*}%
and the logarithmic measure of a set $S\subset \left( 1,+\infty \right) $ is
defined by 
\begin{equation*}
lm\left( S\right) =\dint\limits_{1}^{\infty }\frac{\chi _{S}\left( t\right) 
}{t}dt,
\end{equation*}%
where $\chi _{S}\left( t\right) $ is the characteristic function of a set $%
S. $

The upper density of a set $S\subset \left( 0,+\infty \right) $ is defined by%
\begin{equation*}
\overline{dens}S=\limsup\limits_{r\rightarrow \infty }\frac{m\left( S\cap %
\left[ 0,r\right] \right) }{r}
\end{equation*}%
and the upper logarithmic density of a set $S\subset \left( 1,+\infty
\right) $ is defined by%
\begin{equation*}
\overline{\log dens}S=\limsup\limits_{r\rightarrow \infty }\frac{lm\left(
S\cap \left[ 1,r\right] \right) }{\log r}.
\end{equation*}

\begin{lemma}
\label{L1.1}\cite{3} Let $f$ be a meromorphic function, $\xi $ a nonzero
complex number, and let $\nu >1,$ and $\varepsilon >0$ be given real
constants. Then there exists a subset $S\subset \left( 1,+\infty \right) $
of finite logarithmic measure, and a constant $K$ depending only on $\nu $
and $\xi ,$ such that for all $z$ with $\left\vert z\right\vert =r\notin
S\cup \left[ 0,1\right] ,$ we have%
\begin{equation*}
\left\vert \log \left\vert \frac{f\left( z+\xi \right) }{f\left( z\right) }%
\right\vert \right\vert \leq K\left( \frac{T\left( \nu r,f\right) }{r}+\frac{%
n\left( \nu r\right) }{r}\log ^{\nu }r\log ^{+}n\left( \nu r\right) \right) ,
\end{equation*}%
where $n(t)=n(t,f)+n(t,\frac{1}{f}).$
\end{lemma}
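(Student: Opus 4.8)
The plan is to derive the estimate from the Poisson--Jensen formula. Fix $z$ with $\left\vert z\right\vert =r$ large, and choose a radius $R$ satisfying $r+\left\vert \xi \right\vert <R\leq \nu r$, which is possible once $r$ is large since $\nu >1$; a convenient choice is $R=\tfrac{1}{2}\left( 1+\nu \right) r$. Let $a_{\mu }$ denote the zeros and $b_{\lambda }$ the poles of $f$ in the disc $\left\vert w\right\vert <R$. The Poisson--Jensen formula expresses $\log \left\vert f(w)\right\vert$, for $\left\vert w\right\vert <R$, as the Poisson integral of $\log \left\vert f\right\vert$ over $\left\vert w\right\vert =R$ plus the two Blaschke-type sums running over the $a_{\mu }$ and over the $b_{\lambda }$. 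I would apply this at both $w=z$ and $w=z+\xi$ (both of modulus less than $R$) and subtract, so that $\log \left\vert f(z+\xi )/f(z)\right\vert$ splits into a continuous part, namely the difference of the two Poisson integrals, and a discrete part, namely the differences of the Blaschke sums over the zeros and over the poles. (The parameter $\varepsilon$ plays no role in the conclusion and may simply be ignored.)

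For the continuous part, the difference of integrands is controlled by the gradient in $w$ of the Poisson kernel $P(Re^{i\theta },w)=(R^{2}-\left\vert w\right\vert ^{2})/\left\vert Re^{i\theta }-w\right\vert ^{2}$. Since $\left\vert w\right\vert \leq r+\left\vert \xi \right\vert <R$ and $R-r$ is comparable to $r$ for our choice of $R$, one obtains $\left\vert P(Re^{i\theta },z+\xi )-P(Re^{i\theta },z)\right\vert \leq C_{1}\left\vert \xi \right\vert R/(R-r)^{2}\leq C_{2}\left\vert \xi \right\vert /r$ uniformly in $\theta$. Integrating against $\left\vert \log \left\vert f(Re^{i\theta })\right\vert \right\vert$ and using $\tfrac{1}{2\pi }\int_{0}^{2\pi }\left\vert \log \left\vert f(Re^{i\theta })\right\vert \right\vert \,d\theta \leq 2\,T(R,f)+O(1)\leq 2\,T(\nu r,f)+O(1)$, which follows from Jensen's formula, the continuous part is bounded by $C_{3}\left\vert \xi \right\vert T(\nu r,f)/r$, and this is absorbed into the first term of the asserted estimate.

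The discrete part is the main obstacle. A single zero or pole $a$ with $\left\vert a\right\vert <R$ contributes $\log \left\vert (z+\xi -a)/(z-a)\right\vert +\log \left\vert (R^{2}-\bar{a}z)/(R^{2}-\bar{a}(z+\xi ))\right\vert$. Using the lower bound $\left\vert R^{2}-\bar{a}w\right\vert \geq R(R-\left\vert w\right\vert )$, the second summand is $O(\left\vert \xi \right\vert /r)$ per point, so summing over the $n(R)=n(R,f)+n(R,1/f)$ points gives $O(\left\vert \xi \right\vert \,n(\nu r)/r)$. The delicate summand is $\log \left\vert 1+\xi /(z-a)\right\vert$: for points with $\left\vert z-a\right\vert \geq 2\left\vert \xi \right\vert$ this is at most $C_{4}\left\vert \xi \right\vert /\left\vert z-a\right\vert$ in modulus, and I would group the $a$ according to the dyadic annuli in which they lie and bound the resulting sum; it is here that the factors $\log ^{\nu }r$ and $\log ^{+}n(\nu r)$ arise. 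The genuinely problematic points are those for which $z$ or $z+\xi$ lies very close to $a$, where $\log \left\vert z-a\right\vert$ or $\log \left\vert z+\xi -a\right\vert$ can be large and negative. These I would control by a Gundersen-type exceptional-set argument: for all $r$ outside a set $S\subset \left( 1,+\infty \right)$ of finite logarithmic measure, the distance from $z$ and from $z+\xi$ to the nearest zero or pole of $f$ stays bounded below by a suitable negative power of $r$, so that each such term is $O(\log r)$ and only boundedly many of them occur. Collecting the three contributions and enlarging the constant $K$, which depends only on $\nu$ and $\xi$, gives the stated bound for every $z$ with $\left\vert z\right\vert =r\notin S\cup \left[ 0,1\right]$. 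The step demanding the most care, and the crux of the whole argument, is the construction of the exceptional set $S$ together with the simultaneous control of the near-zero and near-pole terms.
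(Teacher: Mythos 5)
First, a remark on what you are being compared against: the paper does not prove this lemma at all. It is quoted verbatim from Chiang and Feng \cite{3} (their Theorem 2.1), so the only ``proof'' on record in this article is the citation. Your outline does follow the strategy of that source: Poisson--Jensen applied at $z$ and at $z+\xi$ with an intermediate radius $R\in(r+|\xi|,\nu r)$, a continuous part controlled by the gradient of the Poisson kernel together with $m(R,f)+m(R,1/f)\le 2T(R,f)+O(1)$, reflected Blaschke factors contributing $O(|\xi|/r)$ apiece, and an exceptional set of radii to handle the terms $\log|(z+\xi-a)/(z-a)|$. The first two contributions are handled correctly.

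The gap sits exactly where you say the crux is, and it is genuine. (i) For an arbitrary meromorphic function you cannot arrange, outside a set of radii of finite logarithmic measure, that the distance from $z$ to the nearest zero or pole exceeds a \emph{fixed} negative power of $r$: excluding an interval of length $r^{-N}$ about each modulus $|a_k|$ costs logarithmic measure comparable to $\sum_k |a_k|^{-N-1}$, which diverges as soon as $n(t)$ grows faster than $t^{N+1}$, and the lemma carries no growth hypothesis. The admissible margin must shrink with the local size of the counting function, and it is precisely this dependence that produces the factor $\log^{+}n(\nu r)$ in the conclusion. (ii) ``Only boundedly many of them occur'' is false: the number of $a_k$ within unit distance of $z$ can be as large as $n(r+1)-n(r-1)$, which is unbounded; each near term must instead be charged roughly $\log r+\log^{+}n(\nu r)$, and there may be up to $n(\nu r)$ of them. (iii) Most seriously, your treatment of the far points, $\sum_{|z-a|\ge 2|\xi|}O\bigl(|\xi|/|z-a|\bigr)$, yields only $O\bigl(n(\nu r)\bigr)$ in the worst case (all points at distance comparable to $|\xi|$ from $z$), whereas the asserted bound is $O\bigl(n(\nu r)\,r^{-1}\log^{\nu}r\,\log^{+}n(\nu r)\bigr)$, which for functions of finite order is asymptotically much smaller; that extra factor of $1/r$ is the entire point of the lemma, since it is what converts $\rho+\varepsilon$ into $\rho-1+\varepsilon$ in Lemma \ref{L1.3}$(i)$. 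To obtain it one must also exclude the radii near which too many $a_k$ accumulate at \emph{moderate} distance from the circle $|z|=r$, with exclusion widths calibrated against $n$, and then verify that the total logarithmic measure of all these exclusions is finite; this is the content of the separate technical counting lemma in \cite{3} and is not recoverable from a generic ``dyadic annuli plus Gundersen-type exceptional set'' argument. As written, your sketch would at best prove a weaker estimate with $n(\nu r)\log^{+}n(\nu r)$ in place of $n(\nu r)\,r^{-1}\log^{\nu}r\,\log^{+}n(\nu r)$, which does not suffice for the applications made of the lemma in this paper.
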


\begin{lemma}
\label{L1.2}\cite{5} Let $f$ be a transcendental meromorphic function. Let $%
j $ be a nonnegative integer and $\xi $ be an extended complex number. Then
for a real constant $\alpha >1,$ there exists a constant $R>0,$ such that
for all $r>R,$ we have%
\begin{equation}
n(r,\xi ,f^{(j)})\leq \frac{2j+6}{\log \alpha }T(\alpha r,f).  \label{1.1}
\end{equation}
\end{lemma}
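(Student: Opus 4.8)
The plan is to reduce the $\xi$-point counting function of the derivative $f^{(j)}$ to the Nevanlinna characteristic of $f$ itself, through three standard steps. The role of the parameter $\alpha>1$ is precisely to supply the room that will let us dispense with the usual exceptional set of the logarithmic derivative lemma, which is why the right-hand side carries $T$ at the dilated radius $\alpha r$ rather than at $r$.

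First I would pass from the unintegrated counting function to the integrated one. Since $t\mapsto n(t,\xi,f^{(j)})$ is nondecreasing, monotonicity gives
\begin{equation*}
n(r,\xi,f^{(j)})\log\alpha=n(r,\xi,f^{(j)})\int_{r}^{\alpha r}\frac{dt}{t}\leq\int_{r}^{\alpha r}\frac{n(t,\xi,f^{(j)})}{t}\,dt\leq N(\alpha r,\xi,f^{(j)}),
\end{equation*}
which already produces the factor $1/\log\alpha$. Second, I would bound $N(\alpha r,\xi,f^{(j)})$ by the characteristic of $f^{(j)}$. For finite $\xi$ (including $\xi=0$) the first fundamental theorem applied to $1/(f^{(j)}-\xi)$ yields $N(\alpha r,\xi,f^{(j)})\leq T(\alpha r,f^{(j)})+O(1)$, since $T(r,f^{(j)}-\xi)=T(r,f^{(j)})+O(1)$; for $\xi=\infty$ one has directly $N(\alpha r,\infty,f^{(j)})=N(\alpha r,f^{(j)})\leq T(\alpha r,f^{(j)})$.

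Third, I would bound $T(\alpha r,f^{(j)})$ in terms of $T(\alpha r,f)$ by splitting $T=m+N$. Each pole of $f$ of multiplicity $m$ is a pole of $f^{(j)}$ of multiplicity $m+j$, so $N(r,f^{(j)})\leq(j+1)N(r,f)\leq(j+1)T(r,f)$; while $m(r,f^{(j)})\leq m(r,f)+m(r,f^{(j)}/f)\leq T(r,f)+m(r,f^{(j)}/f)$. This gives $T(\alpha r,f^{(j)})\leq(j+2)T(\alpha r,f)+m(\alpha r,f^{(j)}/f)$. The main obstacle is to control the proximity term $m(\alpha r,f^{(j)}/f)$ uniformly for all large $r$, with no excluded set. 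Here I would invoke the pointwise (Borel-type) form of the logarithmic derivative lemma, which bounds $m(r,f^{(j)}/f)$ by a quantity of order $O\!\left(\log^{+}T(\rho,f)+\log^{+}\rho+\log^{+}\frac{1}{\rho-r}\right)$ for any $r<\rho$; choosing $\rho=\alpha r$ makes this a term of order $O(\log r+\log^{+}T(\alpha r,f))$ that is valid for every large $r$.

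Since $f$ is transcendental, $T(\alpha r,f)/\log r\to\infty$, so both this proximity term and the $O(1)$ from the first fundamental theorem are $o(T(\alpha r,f))$ and can be absorbed into the available slack $(j+4)T(\alpha r,f)$ once $r$ exceeds some $R$. Combining the three steps then gives
\begin{equation*}
n(r,\xi,f^{(j)})\leq\frac{1}{\log\alpha}\left[(j+2)T(\alpha r,f)+o(T(\alpha r,f))\right]\leq\frac{2j+6}{\log\alpha}T(\alpha r,f)
\end{equation*}
for all $r>R$, which is the assertion; the constant $2j+6$ is simply a convenient over-estimate that comfortably swallows the lower-order error terms. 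The one genuinely delicate point throughout is the elimination of the exceptional set, and it is exactly what the hypothesis $\alpha>1$ is there to enable.
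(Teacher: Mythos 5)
The paper offers no proof of this lemma at all: it is quoted verbatim from Gundersen \cite{5}, so there is nothing internal to compare your argument with. Judged on its own, your three-step reduction (pass from $n$ to $N$ by integrating over $[r,\alpha r]$, from $N$ to $T(\cdot,f^{(j)})$ by the first fundamental theorem, and from $T(\cdot,f^{(j)})$ to $T(\cdot,f)$ via the pole count $N(r,f^{(j)})\le (j+1)N(r,f)$ and the logarithmic derivative) is exactly the standard route to this classical estimate, and the first three steps are sound, including the separate treatment of $\xi=\infty$.

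The gap is in the final step, and it is precisely the point you yourself flag as delicate. Having spent the entire dilation $\alpha$ on the inequality $n(r,\xi,f^{(j)})\log\alpha\le N(\alpha r,\xi,f^{(j)})$, you must then control $m(\alpha r,f^{(j)}/f)$, and the exceptional-set-free form of the logarithmic derivative lemma needs a reference radius $\rho$ \emph{strictly} larger than $\alpha r$; your choice $\rho=\alpha r$ makes the $\log^{+}\frac{1}{\rho-r}$ term infinite. Once $\rho>\alpha r$, the resulting error $O\bigl(\log^{+}T(\rho,f)+\log r\bigr)$ need not be $o\bigl(T(\alpha r,f)\bigr)$: the characteristic of a transcendental function can jump arbitrarily fast between two fixed proportional radii, so $\log^{+}T(\rho,f)$ may dominate $T(\alpha r,f)$ along a sequence, and transcendence alone does not absorb it. The repair is visible in the constant itself: integrate only over $[r,\sqrt{\alpha}\,r]$, giving $n(r,\xi,f^{(j)})\le\frac{2}{\log\alpha}N(\sqrt{\alpha}\,r,\xi,f^{(j)})$, and then apply the logarithmic derivative estimate at radius $\sqrt{\alpha}\,r$ with reference radius $\alpha r$; now every error term is controlled by $\log^{+}T(\alpha r,f)+\log r=o\bigl(T(\alpha r,f)\bigr)$, and $\frac{2}{\log\alpha}\bigl((j+2)+o(1)\bigr)\le\frac{2j+6}{\log\alpha}$ for all large $r$. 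That factor of $2$ is exactly why the stated constant is $2j+6$ rather than $j+2$; as written, your bookkeeping cannot be closed.
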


\begin{lemma}
\label{L1.3} Let $f$ be a meromorphic function with finite $\left(
p,q\right) $-order, $\rho _{f}\left( p,q\right) =\rho .$ Let $\xi $ be a
nonzero complex number and $\varepsilon >0$ be given real constant. Then
there exists a subset $S\subset \left( 1,+\infty \right) $ of finite
logarithmic measure such that for all $z$ with $\left\vert z\right\vert
=r\notin S\cup \left[ 0,1\right] ,$ we have 
\begin{equation}
i)\exp \left\{ -r^{\rho -1+\varepsilon }\right\} \leq \left\vert \frac{%
f\left( z+\xi \right) }{f\left( z\right) }\right\vert \leq \exp \left\{
r^{\rho -1+\varepsilon }\right\} ,  \label{1.2a}
\end{equation}%
for $p=q=1,$ and%
\begin{equation}
ii)\exp _{p}\left\{ -\left( \log _{q-1}r\right) ^{\rho +\varepsilon
}\right\} \leq \left\vert \frac{f\left( z+\xi \right) }{f\left( z\right) }%
\right\vert \leq \exp _{p}\left\{ \left( \log _{q-1}r\right) ^{\rho
+\varepsilon }\right\} ,  \label{1.3}
\end{equation}%
for $p\geq q\geq 2.$
\end{lemma}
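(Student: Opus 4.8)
The plan is to derive both inequalities directly from the Chiang--Feng estimate of Lemma \ref{L1.1}, converting the $(p,q)$-order hypothesis into a pointwise growth bound on $T(r,f)$. Fix $\nu>1$ (say $\nu=2$) and let $S\subset(1,+\infty)$ be the exceptional set of finite logarithmic measure furnished by Lemma \ref{L1.1} for this $\nu$ and the given $\xi$; I would work throughout with $r=|z|\notin S\cup[0,1]$. The crucial structural point is that Lemma \ref{L1.1} controls $\left|\log\left|f(z+\xi)/f(z)\right|\right|$, so a single upper bound on its right-hand side yields simultaneously the upper and lower estimates in \eqref{1.2a} and \eqref{1.3} after exponentiation.

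First I would translate $\rho_f(p,q)=\rho$ into growth bounds. By the definition of the limsup, for any small $\varepsilon'>0$ and all sufficiently large $r$ one has $\log_p T(r,f)\leq(\rho+\varepsilon')\log_q r$. In the case $p=q=1$ this reads $T(r,f)\leq r^{\rho+\varepsilon'}$, while for $p\geq q\geq 2$, using $\log_q r=\log\log_{q-1}r$ and applying $\exp_{p-1}$, it rearranges to $T(r,f)\leq\exp_{p-1}\left\{(\log_{q-1}r)^{\rho+\varepsilon'}\right\}$. I would also bound the counting function in Lemma \ref{L1.1}: from $n(r,f)\log c\leq N(cr,f)\leq T(cr,f)$ for any $c>1$, together with the analogous bound for $n(r,1/f)$ via $T(cr,1/f)=T(cr,f)+O(1)$, one obtains $n(\nu r)\leq C\,T(c\nu r,f)$ for a suitable constant.

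With these inputs, each term on the right-hand side of Lemma \ref{L1.1}, namely $T(\nu r,f)/r$ and $\left(n(\nu r)/r\right)\log^\nu r\,\log^+ n(\nu r)$, is dominated by $T$ evaluated at a constant multiple of $r$, times factors polynomial in $r$ and $\log r$. In the case $p=q=1$ this produces a bound $C\,r^{\rho-1+\varepsilon'}(\log r)^{O(1)}\leq r^{\rho-1+\varepsilon}$ for large $r$, hence \eqref{1.2a} after exponentiation. For $p\geq q\geq 2$, substituting $T(c\nu r,f)\leq\exp_{p-1}\{(\log_{q-1}(c\nu r))^{\rho+\varepsilon'}\}$ and absorbing the remaining $1/r$, $\log^\nu r$ and $\log^+ n(\nu r)$ factors gives $\left|\log\left|f(z+\xi)/f(z)\right|\right|\leq\exp_{p-1}\{(\log_{q-1}r)^{\rho+\varepsilon}\}$, which exponentiates to \eqref{1.3}.

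I expect the main obstacle to be the bookkeeping in the iterated case $p\geq q\geq 2$: one must check that replacing the argument $c\nu r$ by $r$ (for $q\geq 2$ the ratio $\log_{q-1}(c\nu r)/\log_{q-1}r\to 1$) and absorbing the extra factors of $1/r$, $\log^\nu r$ and $\log^+ n(\nu r)$ can all be carried out after applying $\exp_{p-1}$, merely at the cost of passing from $\varepsilon'$ to a slightly larger $\varepsilon$. Because $\exp_{p-1}$ is so sensitive, the estimates have to be arranged so that these perturbations are swallowed by this final adjustment of $\varepsilon$; here the presence of the $1/r$ factor is helpful, since it forces the second term to decay when $\rho$ is small. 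This monotonicity-and-absorption argument is the technical heart of the proof, whereas the reduction via Lemma \ref{L1.1} itself is immediate.
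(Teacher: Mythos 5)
Your proposal is correct and follows essentially the same route as the paper: apply Lemma \ref{L1.1}, bound the unintegrated counting function by $T$ at a dilated radius (the paper uses Lemma \ref{L1.2} where you use the elementary $n(r)\log c\leq N(cr)\leq T(cr)+O(1)$, an immaterial difference), insert the $(p,q)$-order growth bound $T(r,f)\leq\exp_{p-1}\{(\log_{q-1}r)^{\rho+\varepsilon/2}\}$, absorb the factors $1/r$, $\log^{\nu}r$ and $\log^{+}n(\nu r)$ into the passage from $\varepsilon/2$ to $\varepsilon$, and exponentiate the two-sided bound on $\bigl|\log|f(z+\xi)/f(z)|\bigr|$. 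No substantive gap.
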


\begin{proof}
We prove only second part of the lemma. First part follows from \cite{3}.

By Lemma $\ref{L1.1}$, there exists a subset $S\subset \left( 1,+\infty
\right) $ of finite logarithmic measure, and a constant $K$ depending only
on $\nu $ and $\xi ,$ such that for all $z$ with $\left\vert z\right\vert
=r\notin S\cup \left[ 0,1\right] ,$ we have%
\begin{equation}
\left\vert \log \left\vert \frac{f\left( z+\xi \right) }{f\left( z\right) }%
\right\vert \right\vert \leq K\left( \frac{T\left( \nu r,f\right) }{r}+\frac{%
n\left( \nu r\right) }{r}\log ^{\nu }r\log ^{+}n\left( \nu r\right) \right) ,
\label{1.4}
\end{equation}%
where $n(t)=n(t,f)+n(t,\frac{1}{f}).$ Now using $\left( \ref{1.1}\right) $
in $\left( \ref{1.4}\right) $, we obtain%
\begin{eqnarray}
\left\vert \log \left\vert \frac{f\left( z+\xi \right) }{f\left( z\right) }%
\right\vert \right\vert &\leq &K\left( \frac{T\left( \nu r,f\right) }{r}+%
\frac{12}{\log \alpha }\frac{T\left( \alpha \nu r,f\right) }{r}\log ^{\nu
}r\log ^{+}\left( \frac{12}{\log \alpha }T\left( \alpha \nu r,f\right)
\right) \right)  \notag \\
&\leq &K_{1}\left( T\left( \eta r,f\right) \frac{\log ^{\eta }r}{r}\log
T\left( \eta r,f\right) \right) ,  \label{1.5}
\end{eqnarray}%
where $K_{1}>0$ is some constant and we consider $\eta =\alpha \nu >1.$

Now since $f$ has finite $\left( p,q\right) $-order $\rho _{f}\left(
p,q\right) =\rho ,$ so for given $\varepsilon ,$ $0<\varepsilon <2,$ and for
sufficiently large $r$ we have%
\begin{equation}
T\left( r,f\right) \leq \exp _{p-1}\left\{ \left( \log _{q-1}r\right) ^{\rho
+\frac{\varepsilon }{2}}\right\}  \label{1.6}
\end{equation}

Therefore using $\left( \ref{1.6}\right) $ in $\left( \ref{1.5}\right) $, we
obtain%
\begin{eqnarray}
\left\vert \log \left\vert \frac{f\left( z+\xi \right) }{f\left( z\right) }%
\right\vert \right\vert &\leq &K_{1}\exp _{p-1}\left\{ \log _{q-1}\left(
\eta r\right) ^{\rho +\frac{\varepsilon }{2}}\right\} \frac{\log ^{\eta }r}{r%
}\exp _{p-2}\left\{ \left( \log _{q-1}\left( \eta r\right) \right) ^{\rho +%
\frac{\varepsilon }{2}}\right\}  \notag \\
&\leq &\exp _{p-1}\left\{ \left( \log _{q-1}r\right) ^{\rho +\varepsilon
}\right\} .  \label{1.7}
\end{eqnarray}

Hence from $\left( \ref{1.7}\right) $ we obtain the required result.
\end{proof}

\begin{lemma}
\label{L1.4} Let $\xi _{1}$ and $\xi _{2}$ be two arbitrary distinct complex
numbers and $f$ be a meromorphic function with finite $\left( p,q\right) $%
-order, $\rho _{f}\left( p,q\right) =\rho .$ Then for given $\varepsilon >0,$
there exists a subset $S\subset \left( 1,+\infty \right) $ of finite
logarithmic measure such that for all $z$ with $\left\vert z\right\vert
=r\notin S\cup \left[ 0,1\right] ,$ we have%
\begin{equation*}
i)\text{ }\exp \left\{ -r^{\rho -1+\varepsilon }\right\} \leq \left\vert 
\frac{f\left( z+\xi _{1}\right) }{f\left( z+\xi _{2}\right) }\right\vert
\leq \exp \left\{ r^{\rho -1+\varepsilon }\right\} ,
\end{equation*}%
for $p=q=1,$ and%
\begin{equation*}
ii)\text{ }\exp _{p}\left\{ -\left( \log _{q-1}r\right) ^{\rho +\varepsilon
}\right\} \leq \left\vert \frac{f\left( z+\xi _{1}\right) }{f\left( z+\xi
_{2}\right) }\right\vert \leq \exp _{p}\left\{ \left( \log _{q-1}r\right)
^{\rho +\varepsilon }\right\} ,
\end{equation*}%
for $p\geq q\geq 2.$
\end{lemma}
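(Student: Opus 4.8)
The plan is to reduce Lemma~\ref{L1.4} to the one-shift estimate already established in Lemma~\ref{L1.3} by factoring the quotient of two shifts through $f(z)$ itself. Since $\xi_1\ne\xi_2$, I write
\[
\frac{f(z+\xi_1)}{f(z+\xi_2)}=\frac{f(z+\xi_1)}{f(z)}\cdot\left(\frac{f(z+\xi_2)}{f(z)}\right)^{-1},
\]
so that each factor is a quotient of the form controlled by Lemma~\ref{L1.3}. (A cleaner-looking alternative is the substitution $w=z+\xi_2$, turning the left-hand side into $f(w+(\xi_1-\xi_2))/f(w)$; I avoid it here because the exceptional radii would then have to be transported from the $w$-plane back to the $z$-plane, thickening the bad set, whereas the factorisation keeps the variable $z$ fixed.)

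First I would apply Lemma~\ref{L1.3} twice on the same circle $|z|=r$, once with shift $\xi_1$ and once with shift $\xi_2$, but each time with $\varepsilon/2$ in place of $\varepsilon$. This yields two sets $S_1,S_2\subset(1,+\infty)$ of finite logarithmic measure such that, off $S_1\cup S_2\cup[0,1]$, both estimates
\[
\exp_p\{-(\log_{q-1}r)^{\rho+\varepsilon/2}\}\le\left|\frac{f(z+\xi_j)}{f(z)}\right|\le\exp_p\{(\log_{q-1}r)^{\rho+\varepsilon/2}\},\qquad j=1,2,
\]
hold at once. Putting $S=S_1\cup S_2$ preserves finite logarithmic measure by subadditivity, and this $S$ is the only exceptional set the conclusion needs. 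For the case $p=q=1$ I would run the identical argument starting from part~(i) of Lemma~\ref{L1.3} (i.e. the estimate quoted from \cite{3}).

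The combination of the two factors is the crux, and it is where $p=q=1$ and $p\ge2$ genuinely differ. When $p=q=1$ every factor is a plain exponential, the exponents simply add, and the doubling is absorbed because $2r^{\rho-1+\varepsilon/2}\le r^{\rho-1+\varepsilon}$ for all large $r$; multiplying upper by upper gives the upper bound and lower by lower gives the lower bound directly. For $p\ge2$ the naive move $\exp_p\{a\}\exp_p\{b\}=\exp_p\{a+b\}$ is false, so I would instead pass to logarithms: the intermediate estimate~(\ref{1.7}) underlying Lemma~\ref{L1.3} gives $\bigl|\log|f(z+\xi_j)/f(z)|\bigr|\le\exp_{p-1}\{(\log_{q-1}r)^{\rho+\varepsilon/2}\}$, whence
\[
\Bigl|\log\Bigl|\tfrac{f(z+\xi_1)}{f(z+\xi_2)}\Bigr|\Bigr|\le 2\exp_{p-1}\{(\log_{q-1}r)^{\rho+\varepsilon/2}\}\le\exp_{p-1}\{(\log_{q-1}r)^{\rho+\varepsilon}\}
\]
for large $r$, the factor $2$ being swallowed by the strictly larger exponent $\rho+\varepsilon$ (one verifies $\log 2+\exp_{p-2}\{(\log_{q-1}r)^{\rho+\varepsilon/2}\}\le\exp_{p-2}\{(\log_{q-1}r)^{\rho+\varepsilon}\}$ since the gap between the two towers tends to infinity). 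Exponentiating once more returns $\exp_p\{\pm(\log_{q-1}r)^{\rho+\varepsilon}\}$ and finishes both bounds.

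I expect the only real obstacle to be this top-of-the-tower bookkeeping for $p\ge2$: because products of iterated exponentials are not iterated exponentials of sums, the constant factor and the $\varepsilon/2\to\varepsilon$ slack must be discharged one level down, at the $\exp_{p-1}$ stage where the logarithms do add, and only then exponentiated back. Working through the intermediate inequality~(\ref{1.7}) rather than through~(\ref{1.3}) directly is exactly what makes this step clean and keeps the single union $S_1\cup S_2$ as the lone exceptional set.
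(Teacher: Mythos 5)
Your argument is correct, but it takes a genuinely different route from the paper's. The paper uses precisely the substitution you set aside: it writes $f(z+\xi_1)/f(z+\xi_2)=f\bigl((z+\xi_2)+(\xi_1-\xi_2)\bigr)/f(z+\xi_2)$ and applies Lemma~\ref{L1.3} \emph{once}, with the single shift $\xi_1-\xi_2$, on the circle $\lvert z+\xi_2\rvert=R$; the $\varepsilon/2$-to-$\varepsilon$ slack is then spent converting $\log_{q-1}R$ into $\log_{q-1}r$ via $R\le \lvert z\rvert+\lvert\xi_2\rvert$. That version needs only one invocation of Lemma~\ref{L1.3} and never has to multiply two tower exponentials, but it does require transporting the exceptional set from the variable $R$ back to $r$ --- a point the paper passes over in silence, although the transported set is contained in a $\lvert\xi_2\rvert$-neighbourhood of $S$ and so still has finite logarithmic measure. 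Your factorisation through $f(z)$ trades this for two applications of Lemma~\ref{L1.3} and the union $S_1\cup S_2$, which keeps the exceptional set entirely in the $z$-variable and is the cleaner of the two on that score; the price is the product of two $\exp_p$ bounds, which you discharge correctly by descending to the additive level of the intermediate estimate $(\ref{1.7})$, where $2\exp_{p-1}\{(\log_{q-1}r)^{\rho+\varepsilon/2}\}\le\exp_{p-1}\{(\log_{q-1}r)^{\rho+\varepsilon}\}$ for all large $r$ (with the finitely many remaining radii absorbed into $S$). Working from $(\ref{1.7})$ rather than from the displayed conclusion of Lemma~\ref{L1.3} also quietly sidesteps the awkwardness of interpreting $\exp_p$ of a negative argument in the stated lower bound, which is a further small advantage of your route. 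Both proofs are sound and of comparable length.
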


\begin{proof}
We prove only second part of the lemma. First part follows from \cite{3}.

For the second part we consider the following expression%
\begin{equation*}
\left\vert \frac{f\left( z+\xi _{1}\right) }{f\left( z+\xi _{2}\right) }%
\right\vert =\left\vert \frac{f\left( z+\xi _{2}+\xi _{1}-\xi _{2}\right) }{%
f\left( z+\xi _{2}\right) }\right\vert ,\text{ }\left( \xi _{1}\neq \xi
_{2}\right) .
\end{equation*}

Now by using Lemma $\ref{L1.3}$, for any given $\varepsilon >0$ and for all $%
z$ with $\left\vert z+\xi _{2}\right\vert =R\notin S\cup \left[ 0,1\right] $
such that $lm\left( S\right) <\infty ,$ we have%
\begin{eqnarray*}
\exp _{p}\left\{ -\left( \log _{q-1}\left( r\right) \right) ^{\rho
+\varepsilon }\right\} &\leq &\exp _{p}\left\{ -\left( \log _{q-1}\left(
\left\vert z\right\vert +\xi _{2}\right) \right) ^{\rho +\frac{\varepsilon }{%
2}}\right\} \\
&\leq &\exp _{p}\left\{ -\left( \log _{q-1}R\right) ^{\rho +\frac{%
\varepsilon }{2}}\right\} \\
&\leq &\left\vert \frac{f\left( z+\xi _{1}\right) }{f\left( z+\xi
_{2}\right) }\right\vert =\left\vert \frac{f\left( z+\xi _{2}+\xi _{1}-\xi
_{2}\right) }{f\left( z+\xi _{2}\right) }\right\vert \\
&\leq &\exp _{p}\left\{ \left( \log _{q-1}R\right) ^{\rho +\frac{\varepsilon 
}{2}}\right\} \leq \exp _{p}\left\{ \left( \log _{q-1}\left( \left\vert
z\right\vert +\left\vert \xi _{2}\right\vert \right) \right) ^{\rho +\frac{%
\varepsilon }{2}}\right\} \\
&\leq &\exp _{p}\left\{ \left( \log _{q-1}\left( r\right) \right) ^{\rho
+\varepsilon }\right\} ,
\end{eqnarray*}%
for $z$ with $\left\vert z\right\vert =r\notin S\cup \left[ 0,1\right] $,
where $S\subset \left( 1,+\infty \right) $ is a set of finite logarithmic
measure.

Hence we obtain the required result.
\end{proof}

\begin{lemma}
\label{L1.5}\cite{6} Let $f$ be a nonconstant meromorphic function. Suppose $%
z_{1}\in \mathbb{C},$ $\delta <1~$and $\varepsilon >0.$ Then for all $r$
outside of a possible exceptional set $S$ with finite logarithmic measure $%
\dint\limits_{S}\frac{dr}{r}<\infty ,$ we have 
\begin{equation*}
m\left( r,\frac{f\left( z+z_{1}\right) }{f\left( z\right) }\right) =o\left( 
\frac{\left( T\left( r+\left\vert z_{1}\right\vert ,\text{ }f\right) \right)
^{1+\varepsilon }}{r^{\delta }}\right) .
\end{equation*}
\end{lemma}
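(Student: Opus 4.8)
Since Lemma~\ref{L1.5} is the difference analogue of the classical lemma on the logarithmic derivative, the plan is to mimic that proof via the Poisson--Jensen formula together with a Borel-type growth lemma to absorb the enlargement of the radius into the exceptional set. First I would observe that, with $z=re^{i\theta}$,
\begin{equation*}
m\left(r,\frac{f(z+z_{1})}{f(z)}\right)\leq \frac{1}{2\pi}\int_{0}^{2\pi}\left\vert \log\left\vert \frac{f(z+z_{1})}{f(z)}\right\vert \right\vert d\theta =\frac{1}{2\pi}\int_{0}^{2\pi}\bigl\vert \log\vert f(z+z_{1})\vert-\log\vert f(z)\vert\bigr\vert\, d\theta,
\end{equation*}
so the whole task reduces to estimating the difference of two values of $\log\vert f\vert$. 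I would then apply the Poisson--Jensen formula on a disk of radius $R$ chosen large enough that both $z$ and $z+z_{1}$ lie well inside it (say $R=2(r+\vert z_{1}\vert)$), writing $\log\vert f\vert$ as a Poisson boundary integral plus the usual sums over the zeros $a_{\mu}$ and poles $b_{\nu}$ in $\vert w\vert<R$.

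Subtracting the two representations, the estimate splits into two kinds of terms. The boundary (Poisson-kernel) part is handled by the mean value theorem applied to the Poisson kernel: the difference of kernels at $z+z_{1}$ and $z$ carries a factor $\vert z_{1}\vert$ and is dominated by a quantity of order $R/(R-r-\vert z_{1}\vert)^{2}$, which against $\int_{0}^{2\pi}\vert\log\vert f(Re^{i\phi})\vert\vert\,d\phi=O(T(R,f))$ produces a bound of the form $C\,\vert z_{1}\vert\,T(R,f)/r$. The singular part, coming from the terms $\log\bigl\vert R^{2}-\bar a_{\mu}(z+z_{1})\bigr\vert-\log\vert R(z+z_{1}-a_{\mu})\vert$ and their analogues at $z$, I would control by integrating in $\theta$: each singular integral $\int_{0}^{2\pi}\bigl\vert\log\vert re^{i\theta}-a_{\mu}\vert\bigr\vert\,d\theta$ is uniformly bounded, and the number of zeros and poles is $n(R,f)+n(R,\tfrac{1}{f})=O(T(\sigma R,f))$ by a counting estimate in the spirit of Lemma~\ref{L1.2}. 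Summing, this contributes a bound again controlled by a constant times $T(R,f)$ (up to logarithmic factors), so after combining both parts I would arrive at a pointwise-in-$r$ estimate of the shape $m\bigl(r,f(z+z_{1})/f(z)\bigr)\leq C\,T(R,f)\,\Phi(r,R)$ for an explicit elementary factor $\Phi$.

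The final and most delicate step is to convert $T(R,f)$ at the enlarged radius $R$ back to $T(r+\vert z_{1}\vert,f)$ and to extract exactly the exponent $1+\varepsilon$ together with the gain $r^{-\delta}$, $\delta<1$. For this I would invoke a standard Borel-type lemma: for a nondecreasing function $T$ there is a set $S$ with $\int_{S}\frac{dr}{r}<\infty$ such that $T(r+\tfrac{r}{(T(r))^{\text{small}}},f)\leq 2\,T(r,f)$ off $S$, and a suitable choice of the increment lets one replace $T(R,f)$ by $(T(r+\vert z_{1}\vert,f))^{1+\varepsilon}$ while the elementary factor $\Phi$ supplies the $r^{-\delta}$. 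I expect this last maneuver, namely calibrating the radius increment so that the Borel estimate yields precisely the $o\bigl((T(r+\vert z_{1}\vert,f))^{1+\varepsilon}/r^{\delta}\bigr)$ form with $S$ of finite logarithmic measure, to be the main obstacle; the Poisson-kernel and zero--pole estimates are routine once the radii are fixed, but matching the exponents $1+\varepsilon$ and $\delta$ requires the careful bookkeeping that forces the exceptional set rather than a pointwise conclusion.
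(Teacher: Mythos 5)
The paper does not actually prove Lemma \ref{L1.5}: it is quoted verbatim from Halburd--Korhonen \cite{6}, so there is no internal proof to compare against. Your strategy, however, is precisely the one used in that source (and in Chiang--Feng \cite{3}): a Poisson--Jensen representation on a larger disk, a mean-value estimate on the difference of Poisson kernels for the boundary term, uniformly bounded singular integrals plus a counting estimate of the type of Lemma \ref{L1.2} for the zero/pole terms, and finally a Borel-type growth lemma to pull $T(R,f)$ back to $T(r+|z_{1}|,f)$. In fact the intermediate inequality you are driving at is already recorded in the paper as Lemma \ref{L1.8}; granting that, the whole proof reduces to choosing $R$ and $R'$ and invoking Borel.

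That said, your proposal has a genuine gap exactly where you flag one, and your provisional choice $R=2(r+|z_{1}|)$ would sink the argument rather than merely complicate it. With that $R$ the boundary term is of order $|z_{1}|\,T(2r,f)/r$, and for a function of infinite order $T(2r,f)$ need not be $O\bigl((T(r+|z_{1}|,f))^{1+\varepsilon}\bigr)$ for any $\varepsilon$, so no Borel lemma can rescue a doubled radius; the increment must be taken \emph{small}. The working choice is $R-r-|z_{1}|=(r+|z_{1}|)/\bigl(T(r+|z_{1}|,f)\bigr)^{\varepsilon/2}$, with $R'$ correspondingly close to $R$: then the Borel lemma $T\bigl(\rho+\rho/(T(\rho,f))^{\varepsilon/2}\bigr)\leq 2\,T(\rho,f)$, valid off a set of finite logarithmic measure, gives $T(R',f)\leq C\,T(r+|z_{1}|,f)$, while the kernel factor $R/(R-r-|z_{1}|)^{2}$ contributes $\bigl(T(r+|z_{1}|,f)\bigr)^{\varepsilon}/(r+|z_{1}|)$; the product is $O\bigl((T(r+|z_{1}|,f))^{1+\varepsilon}/r\bigr)$, which is $o\bigl((T(r+|z_{1}|,f))^{1+\varepsilon}/r^{\delta}\bigr)$ for every $\delta<1$. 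So the exponent $1+\varepsilon$ and the gain $r^{-\delta}$ both come out of this single calibration, which is the entire content of the lemma; as written, your proof stops just short of it.
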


\begin{lemma}
\label{L1.5A}\cite{4} Let $f$ be a nonconstant meromorphic function and $%
z_{1},$ $z_{2}$ be nonzero complex constants. Then for $r\rightarrow +\infty 
$ we have%
\begin{equation*}
\left( 1+o(1)\right) T\left( r-\left\vert z_{1}\right\vert ,\text{ }f\right)
\leq T\left( r,\text{ }f\left( z+z_{1}\right) \right) \leq \left(
1+o(1)\right) T\left( r+\left\vert z_{1}\right\vert ,\text{ }f\right) .
\end{equation*}

Consequently, 
\begin{equation*}
\rho _{f\left( z+z_{2}\right) }\left( p,q\right) =\rho _{f}\left( p,q\right)
,
\end{equation*}
for $p\geq q,$ $p,q\in \mathbb{N}.$
\end{lemma}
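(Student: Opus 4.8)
The plan is to establish the two–sided characteristic inequality first and then read off the order equality as a formal consequence. The cleanest route to the characteristic inequality, valid for \emph{every} nonconstant meromorphic $f$ (including infinite order), is through the Ahlfors--Shimizu characteristic $T_{0}(r,f)$, which satisfies $T_{0}(r,f)=T(r,f)+O(1)$ and admits the geometric representation $T_{0}(r,f)=\int_{0}^{r}A(t,f)\,\frac{dt}{t}$, where $A(t,f)=\frac{1}{\pi}\int_{|z|\le t}\bigl(\frac{|f'(z)|}{1+|f(z)|^{2}}\bigr)^{2}\,dx\,dy$ is the spherical area (with multiplicity) of the image of the disk $\{|z|\le t\}$. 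This geometric reading is what lets a shift be handled by a change of variables, which the purely analytic split into $m$ and $N$ does not do cleanly in full generality.

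First I would record the key geometric observation. Writing $g(z)=f(z+z_{1})$, the substitution $w=z+z_{1}$ gives $A(t,g)=\frac{1}{\pi}\int_{|w-z_{1}|\le t}\bigl(\frac{|f'(w)|}{1+|f(w)|^{2}}\bigr)^{2}\,du\,dv$, the spherical area over the shifted disk centered at $z_{1}$. For $t>|z_{1}|$ the inclusions $\{|w|\le t-|z_{1}|\}\subseteq\{|w-z_{1}|\le t\}\subseteq\{|w|\le t+|z_{1}|\}$, together with nonnegativity of the area density, yield $A(t-|z_{1}|,f)\le A(t,g)\le A(t+|z_{1}|,f)$. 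Integrating against $\frac{dt}{t}$ and changing variables ($s=t\pm|z_{1}|$) bounds $T_{0}(r,g)$ between $\int_{0}^{r-|z_{1}|}\frac{A(s,f)}{s+|z_{1}|}\,ds$ and $\int_{|z_{1}|}^{r+|z_{1}|}\frac{A(s,f)}{s-|z_{1}|}\,ds$.

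The main obstacle is extracting the sharp factor $(1+o(1))$ rather than a mere multiplicative constant. The upper integral differs from $T_{0}(r+|z_{1}|,f)=\int_{0}^{r+|z_{1}|}\frac{A(s,f)}{s}\,ds$ by the term $|z_{1}|\int^{r+|z_{1}|}\frac{A(s,f)}{s(s-|z_{1}|)}\,ds$, which after integration by parts (using $A(s,f)=s\,T_{0}'(s,f)$) is controlled by $\frac{T_{0}(R,f)}{R}+\int^{R}\frac{T_{0}(s,f)}{s^{2}}\,ds$ with $R=r+|z_{1}|$. The crux is that $\int^{R}\frac{T_{0}(s,f)}{s^{2}}\,ds=o\bigl(T_{0}(R,f)\bigr)$ for any nonconstant $f$: splitting the integral at a fixed $M$, sending $R\to\infty$ with $M$ held fixed kills the lower part (its numerator is bounded while $T_{0}(R,f)\to\infty$), whereas on $[M,R]$ one has $\int_{M}^{R}s^{-2}\,ds\le 1/M$, and letting $M\to\infty$ afterwards forces the ratio to $0$. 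This gives $T_{0}(r,g)\le(1+o(1))T_{0}(r+|z_{1}|,f)$ and, symmetrically, the lower estimate; passing from $T_{0}$ to $T$ costs only $O(1)$, absorbed into the $(1+o(1))$ since $T(r,f)\to\infty$.

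Finally the order equality follows formally from the sandwich with $z_{1}=z_{2}$. Applying $\log_{p}$ to the upper bound gives $\log_{p}T(r,g)\le\log_{p}T(r+|z_{2}|,f)+o(1)$, since a bounded multiplicative factor contributes $o(1)$ after one logarithm and is negligible under further iteration. Dividing by $\log_{q}r$, using $\log_{q}(r+|z_{2}|)/\log_{q}r\to 1$, and substituting $R=r+|z_{2}|$ yields $\rho_{g}\left(p,q\right)\le\rho_{f}\left(p,q\right)$; the lower bound gives the reverse inequality, hence equality for all $p\ge q$. I would remark that the alternative $m$--$N$ splitting, using the logarithmic difference estimate of Lemma~\ref{L1.5} for the proximity term and the disk–inclusion count $n(t-|z_{1}|,f)\le n(t,g)\le n(t+|z_{1}|,f)$ for $N$, also works but only in the finite–order regime, where $T(r+|z_{1}|,f)^{1+\varepsilon}/r^{\delta}=o(T)$; the Ahlfors--Shimizu route is precisely what secures the stated generality.
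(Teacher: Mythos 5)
The paper offers no proof of this lemma at all: it is quoted directly from Goldberg and Ostrovskii \cite{4}, so there is nothing internal to compare your argument against. Your Ahlfors--Shimizu route is a correct and essentially standard way to prove it, and it does deliver the full generality of the statement (no finite-order hypothesis on $f$), which, as you rightly note, the $m$--$N$ splitting via Lemma \ref{L1.5} would not. The core mechanism --- the disk inclusions $\{|w|\le t-|z_1|\}\subseteq\{|w-z_1|\le t\}\subseteq\{|w|\le t+|z_1|\}$ applied to a monotone set function, a change of variables in $\int A(t,f)\,dt/t$, and the elementary fact that $\int^R T_0(s,f)s^{-2}\,ds=o(T_0(R,f))$ --- is the same one the source uses; they phrase it through Cartan's identity $T(r,f)=\frac{1}{2\pi}\int_0^{2\pi}N(r,e^{i\theta},f)\,d\theta+O(1)$ and the counting functions $n(t,a)$ rather than the spherical area, but the two are interchangeable for this purpose. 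One slip to repair: your claimed upper bound $\int_{|z_1|}^{r+|z_1|}A(s,f)(s-|z_1|)^{-1}\,ds$ is a divergent integral whenever $A(|z_1|,f)>0$, because $A(t+|z_1|,f)/t$ is not integrable at $t=0$; you must first split off $\int_0^{t_0}A(t,g)\,dt/t=O(1)$ for some fixed $t_0>2|z_1|$ before invoking $A(t,g)\le A(t+|z_1|,f)$ on the remaining range. After that the comparison with $T_0(r+|z_1|,f)$, the $o(T_0)$ estimate for the error term, and the deduction of $\rho_{f(z+z_2)}(p,q)=\rho_f(p,q)$ from the two-sided bound all go through exactly as you describe.
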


\begin{lemma}
\label{L1.6}\cite{6} Let $f$ be a nonconstant meromorphic function. Suppose $%
z_{1},z_{2}\in \mathbb{C},$ such that $z_{1}\neq z_{2},$ $\delta <1,$ $%
\varepsilon >0.$ Then 
\begin{equation*}
m\left( r,\frac{f\left( z+z_{1}\right) }{f\left( z+z_{2}\right) }\right)
=o\left( \frac{\left\{ T\left( r+\left\vert z_{1}-z_{2}\right\vert
+\left\vert z_{2}\right\vert ,\text{ }f\right) \right\} ^{1+\varepsilon }}{%
r^{\delta }}\right)
\end{equation*}%
for all $r$ outside of a possible exceptional set $S$ with finite
logarithmic measure $\dint\limits_{S}\frac{dr}{r}<\infty .$
\end{lemma}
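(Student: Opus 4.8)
The plan is to reduce the two-shift ratio to a single-shift quotient of an auxiliary function, so that Lemma \ref{L1.5} applies directly. I would introduce $F(z)=f(z+z_{2})$, which is again a nonconstant meromorphic function because $f$ is. The crucial observation is the exact identity
\[
\frac{f(z+z_{1})}{f(z+z_{2})}=\frac{F(z+(z_{1}-z_{2}))}{F(z)},
\]
valid for every $z$. Since $z_{1}\neq z_{2}$, the shift $a:=z_{1}-z_{2}$ is a nonzero complex number, so the right-hand side is a genuine single-shift quotient of $F$, to which Lemma \ref{L1.5} can be applied. A naive alternative, namely splitting the ratio as $\frac{f(z+z_{1})}{f(z)}\cdot\frac{f(z)}{f(z+z_{2})}$ and estimating each factor separately, is less convenient: the second factor forces one to pass to a reciprocal (whose proximity function differs from the original by uncontrolled counting-function terms) or to accumulate two radius shifts of size $2\left\vert z_{2}\right\vert $, overshooting the target radius. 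The single-shift-of-$F$ route avoids both issues.

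First I would apply Lemma \ref{L1.5} to $F$ with the shift $a=z_{1}-z_{2}$: for the prescribed $\delta<1$ and $\varepsilon>0$ there is an exceptional set $S$ of finite logarithmic measure such that, for all $r\notin S$,
\[
m\left( r,\frac{f(z+z_{1})}{f(z+z_{2})}\right) =m\left( r,\frac{F(z+a)}{F(z)}\right) =o\left( \frac{\left( T(r+\left\vert z_{1}-z_{2}\right\vert ,F)\right) ^{1+\varepsilon }}{r^{\delta }}\right) .
\]
At this stage the estimate is expressed through the characteristic of $F$ rather than of $f$, so the only remaining task is to translate $T(\cdot ,F)$ back into $T(\cdot ,f)$.

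Next I would invoke Lemma \ref{L1.5A} with the shift $z_{2}$, which gives $T(R,F)=T(R,f(z+z_{2}))\leq (1+o(1))T(R+\left\vert z_{2}\right\vert ,f)$ as $R\rightarrow \infty $. Taking $R=r+\left\vert z_{1}-z_{2}\right\vert $ yields
\[
T(r+\left\vert z_{1}-z_{2}\right\vert ,F)\leq (1+o(1))\,T(r+\left\vert z_{1}-z_{2}\right\vert +\left\vert z_{2}\right\vert ,f),
\]
which produces exactly the radius $r+\left\vert z_{1}-z_{2}\right\vert +\left\vert z_{2}\right\vert $ appearing in the statement. Raising this inequality to the power $1+\varepsilon $ multiplies the bound only by $(1+o(1))^{1+\varepsilon }=1+o(1)$, a bounded factor that is absorbed into the $o(\cdot )$ term. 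Substituting into the previous display gives the claimed estimate.

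The argument is short because Lemma \ref{L1.5} and Lemma \ref{L1.5A} carry all the analytic weight; the one point requiring care is the radius bookkeeping. The main obstacle, modest as it is, lies in choosing the auxiliary function and shift so that the nested translations collapse to precisely $r+\left\vert z_{1}-z_{2}\right\vert +\left\vert z_{2}\right\vert $: moving the base point by $z_{2}$ (through $F$) and the argument by $z_{1}-z_{2}$ must be combined in the correct order, and one must record that $z_{1}-z_{2}\neq 0$ so that Lemma \ref{L1.5} is indeed applicable. The exceptional set is inherited from the application of Lemma \ref{L1.5} to $F$ and therefore has finite logarithmic measure.
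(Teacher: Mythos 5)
Your argument is correct. Note first that the paper itself offers no proof of this lemma: it is imported verbatim from Halburd--Korhonen (reference [6]), so there is no in-paper argument to compare against. What you have written is the standard derivation of the two-shift estimate from the one-shift estimate, and it holds up: the identity $f(z+z_{1})/f(z+z_{2})=F(z+(z_{1}-z_{2}))/F(z)$ with $F(z)=f(z+z_{2})$ is exact, $F$ is nonconstant and meromorphic, and $z_{1}-z_{2}\neq 0$, so Lemma \ref{L1.5} applies to $F$ and yields the bound with $T(r+\left\vert z_{1}-z_{2}\right\vert ,F)$; Lemma \ref{L1.5A} then converts this to $(1+o(1))T(r+\left\vert z_{1}-z_{2}\right\vert +\left\vert z_{2}\right\vert ,f)$, and the factor $(1+o(1))^{1+\varepsilon }$ is indeed absorbed into the $o(\cdot )$. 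Your radius bookkeeping reproduces exactly the radius in the statement, and the exceptional set is the one supplied by Lemma \ref{L1.5}. It is worth observing that your substitution is the same device the paper uses in its proof of Lemma \ref{L1.4}, where $f(z+\xi _{1})/f(z+\xi _{2})$ is rewritten as $f(z+\xi _{2}+\xi _{1}-\xi _{2})/f(z+\xi _{2})$, so your route is entirely consistent with the paper's own technique; your remark that the naive factorization through $f(z)$ would double the radius shift and entangle reciprocals is also a fair justification for preferring the auxiliary-function reduction.
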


\begin{lemma}
\label{L1.7}\cite{8a} Let $f$ be a nonconstant meromorphic function with
nonzero finite $(p,q)$-order $\rho _{f}\left( p,q\right) $ and nonzero
finite $(p,q)$-type $\tau _{f}\left( p,q\right) .$ Then for any given $%
b<\tau _{f}\left( p,q\right) ,$ there exists a subset $S\subset \left[
1,+\infty \right) $ of infinite logarithmic measure such that for all $r\in
S,$ we have 
\begin{equation*}
\log _{p-1}T\left( r,\text{ }f\right) >b\left( \log _{q-1}r\right) ^{\rho
_{f}\left( p,q\right) }.
\end{equation*}
\end{lemma}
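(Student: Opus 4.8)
The plan is to exploit the fact that the $(p,q)$-type is a $\limsup$, and to upgrade the sequence realizing that $\limsup$ into a union of intervals of total infinite logarithmic measure, using the monotonicity of the Nevanlinna characteristic $T(r,f)$. Write $\rho=\rho_f(p,q)$ and $\tau=\tau_f(p,q)$, and fix $b<\tau$. The first step is to choose an auxiliary constant $b'$ with $b<b'<\tau$; the slack $b'/b>1$ is what will later absorb the growth of the denominator across an interval, so working with $b'$ rather than $b$ from the outset is essential.

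By the definition $\tau=\limsup_{r\to\infty}\frac{\log_{p-1}T(r,f)}{(\log_{q-1}r)^{\rho}}$, there is a strictly increasing sequence $r_n\to\infty$ along which $\log_{p-1}T(r_n,f)>b'(\log_{q-1}r_n)^{\rho}$. After passing to a subsequence I may assume $r_{n+1}>\sigma r_n$, where $\sigma>1$ is a fixed constant to be pinned down below. I then attach to each $r_n$ the interval $I_n=[r_n,\sigma r_n]$ and set $S=\bigcup_n I_n$; since the $I_n$ are disjoint and each has logarithmic measure $\int_{r_n}^{\sigma r_n}\frac{dr}{r}=\log\sigma$, we get $lm(S)=\sum_n\log\sigma=\infty$, as required.

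The heart of the matter is to verify the claimed inequality at every point of $I_n$ (for $n$ large), not merely at the left endpoint $r_n$. Since $f$ has nonzero finite $(p,q)$-order it is transcendental, so $T(r,f)\to\infty$ and $T(r,f)$ is non-decreasing; hence for $r\in I_n$ one has $\log_{p-1}T(r,f)\ge\log_{p-1}T(r_n,f)>b'(\log_{q-1}r_n)^{\rho}$. It therefore suffices to establish $b'(\log_{q-1}r_n)^{\rho}\ge b(\log_{q-1}r)^{\rho}$ on $I_n$, that is, to control the ratio $(\log_{q-1}r/\log_{q-1}r_n)^{\rho}$ as $r$ ranges over $[r_n,\sigma r_n]$.

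This last estimate is where the two parameter regimes behave differently, and it is the main obstacle. When $q\ge 2$ the iterated logarithm changes only negligibly across $I_n$, since $\log_{q-1}(\sigma r_n)/\log_{q-1}(r_n)\to 1$ as $n\to\infty$; thus for any fixed $\sigma>1$ the ratio $(\log_{q-1}r/\log_{q-1}r_n)^{\rho}$ is eventually below $b'/b$, and the inequality follows. When $q=1$, however, $\log_{q-1}r=r$, so the denominator is $r^{\rho}$ and grows by the genuine factor $\sigma^{\rho}$ across $I_n$; here I would fix $\sigma=(b'/b)^{1/\rho}>1$, which forces $(r/r_n)^{\rho}\le\sigma^{\rho}=b'/b$ on $I_n$. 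With this single choice of $\sigma$ both regimes yield $b(\log_{q-1}r)^{\rho}\le b'(\log_{q-1}r_n)^{\rho}<\log_{p-1}T(r,f)$ for all $r\in S$ with $n$ large, which proves the lemma.
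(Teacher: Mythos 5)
Your argument is correct. Note first that the paper does not prove this lemma at all: it is quoted verbatim from \cite{8a} (Li and Cao), so there is no in-paper proof to compare against. Your construction --- pick $b<b'<\tau_f(p,q)$, extract a sequence $r_n\to\infty$ realizing the $\limsup$ at level $b'$, thin it so that the intervals $I_n=[r_n,\sigma r_n]$ are disjoint, and use the monotonicity of $T(r,f)$ together with the slack $b'/b>1$ to propagate the inequality from the left endpoint across each interval --- is exactly the standard proof of this type of statement in the literature on iterated and $[p,q]$-orders, and every step checks out: the intervals each contribute logarithmic measure $\log\sigma$, so $lm(S)=\infty$, and your case split correctly isolates the only regime ($q=1$, where $\log_{q-1}r=r$) in which the comparison function genuinely grows across $I_n$, handling it by the choice $\sigma=(b'/b)^{1/\rho}$. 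Two marginal remarks. First, you tacitly assume $b>0$ when forming $(b'/b)^{1/\rho}$; for $b\le 0$ the conclusion is trivial (since $T(r,f)\to\infty$ forces $\log_{p-1}T(r,f)>0\ge b(\log_{q-1}r)^{\rho}$ for all large $r$), and in this paper the lemma is only ever invoked with $b>0$, so this costs nothing but deserves a sentence. Second, the more common variant of the same idea uses the shrinking intervals $[r_n,(1+\tfrac1n)r_n]$, whose logarithmic measures $\log(1+\tfrac1n)\sim\tfrac1n$ still sum to $+\infty$ while the ratio $\log_{q-1}((1+\tfrac1n)r_n)/\log_{q-1}(r_n)\to 1$ in every regime including $q=1$; that choice removes your case split entirely, at no cost. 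Either way the proof is sound.
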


\begin{lemma}
\label{L1.8}\cite{3} Let $\alpha ~,$ $R,~R^{\prime }$ be real numbers such
that $0<\alpha <1,~R,~R^{\prime }>0$ and let $\xi $ be a nonzero complex
number. Then there exists a positive constant $K_{\alpha }$ which depends
only on $\alpha $ such that for a given meromorphic function $f$ we have,
when $\left\vert z\right\vert =r,$ $\max \left\{ 1,r+\left\vert \xi
\right\vert \right\} <R<R^{\prime },$ the estimate%
\begin{gather*}
m\left( r,\frac{f\left( z+\xi \right) }{f\left( z\right) }\right) +m\left( r,%
\frac{f\left( z\right) }{f\left( z+\xi \right) }\right) \leq \frac{%
2\left\vert \xi \right\vert R}{\left( R-r-\left\vert \xi \right\vert \right)
^{2}}\left( m(R,\text{ }f)+m\left( R,\frac{1}{f}\right) \right) \\
+\frac{2R^{\prime }}{(R^{\prime }-R)}\left( \frac{\left\vert \xi \right\vert 
}{R-r-\left\vert \xi \right\vert }+\frac{K_{\alpha }\left\vert \xi
\right\vert ^{\alpha }}{\left( 1-\alpha \right) r^{\alpha }}\right) \left(
N\left( R^{\prime },\text{ }f\right) +N\left( R^{\prime },\frac{1}{f}\right)
\right) .
\end{gather*}
\end{lemma}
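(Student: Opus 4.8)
The plan is to derive the estimate from the Poisson--Jensen formula on the circle $|w|=R$ and then pass from the pointwise logarithmic difference to the proximity functions. The starting observation is that, since $\log^{+}x+\log^{+}(1/x)=\left|\log x\right|$ for $x>0$, the left-hand side equals
\[
m\left(r,\frac{f(z+\xi)}{f(z)}\right)+m\left(r,\frac{f(z)}{f(z+\xi)}\right)=\frac{1}{2\pi}\int_{0}^{2\pi}\left|\log\left|\frac{f(re^{i\phi}+\xi)}{f(re^{i\phi})}\right|\right|d\phi ,
\]
so it suffices to integrate the pointwise quantity $\left|\log|f(z+\xi)/f(z)|\right|$ over $|z|=r$. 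I would write the Poisson--Jensen formula for $\log|f(z)|$ on $|w|=R$, with zeros $a_{\mu}$ and poles $b_{\nu}$ of $f$ in $|w|<R$, and subtract the two instances at the arguments $z+\xi$ and $z$ (both admissible since $r+|\xi|<R$). This yields three contributions: a Poisson-kernel integral against $\log|f(Re^{i\theta})|$, a sum over zeros, and a sum over poles.

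For the kernel integral I would bound the difference of the two Poisson kernels by $|\xi|$ times the supremum of the kernel's gradient along the segment $[z,z+\xi]$. Writing the kernel as the real part of $(Re^{i\theta}+z)/(Re^{i\theta}-z)$, its complex derivative in $z$ is $2Re^{i\theta}/(Re^{i\theta}-z)^{2}$, and since $|Re^{i\theta}-w|\ge R-r-|\xi|$ on the segment the gradient is at most $2R/(R-r-|\xi|)^{2}$. Integrating $|\log|f(Re^{i\theta})||$ against this, and recalling $\frac{1}{2\pi}\int_{0}^{2\pi}|\log|f(Re^{i\theta})||\,d\theta=m(R,f)+m(R,1/f)$, produces exactly the first term $\frac{2|\xi|R}{(R-r-|\xi|)^{2}}(m(R,f)+m(R,1/f))$.

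For each zero term I would split
\[
\log\left|\frac{R^{2}-\bar a_{\mu}(z+\xi)}{R(z+\xi-a_{\mu})}\right|-\log\left|\frac{R^{2}-\bar a_{\mu}z}{R(z-a_{\mu})}\right|=\log\left|\frac{R^{2}-\bar a_{\mu}z}{R^{2}-\bar a_{\mu}(z+\xi)}\right|+\log\left|\frac{z-a_{\mu}}{z+\xi-a_{\mu}}\right|,
\]
and handle the two pieces separately (the pole terms are identical with the roles of $R^{2}-\bar a_{\mu}w$ and $w-a_{\mu}$ exchanged). The first piece is bounded uniformly in $\phi$ by $|\xi|/(R-r-|\xi|)$, using $|R^{2}-\bar a_{\mu}w|\ge R(R-r-|\xi|)$ on the segment. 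The second piece is the delicate one: writing it as $-\log|1+\xi/(z-a_{\mu})|$, I would estimate its integral over $|z|=r$ by the elementary inequality $\log(1+t)\le \alpha^{-1}t^{\alpha}$ together with the key bound $\frac{1}{2\pi}\int_{0}^{2\pi}|re^{i\phi}-a|^{-\alpha}d\phi\le C(1-\alpha)^{-1}r^{-\alpha}$, whose factors $(1-\alpha)^{-1}$ and $r^{-\alpha}$ come from the local behaviour $\int_{0}\phi^{-\alpha}d\phi$ near the closest approach to $a$ (applied at $a=a_{\mu}$ for the case $a_{\mu}\approx z$ and at $a=a_{\mu}-\xi$ for the case $a_{\mu}\approx z+\xi$). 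This gives the per-zero bound $K_{\alpha}|\xi|^{\alpha}/((1-\alpha)r^{\alpha})$, with $K_{\alpha}$ absorbing $C/\alpha$.

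Finally I would sum the per-zero and per-pole bounds over the $n(R,1/f)$ zeros and $n(R,f)$ poles in $|w|<R$. The two radii enter here through the standard counting inequality $n(R,\cdot)\log(R'/R)\le N(R',\cdot)$, whence $n(R,\cdot)\le \frac{R'}{R'-R}N(R',\cdot)$ since $\log(R'/R)\ge (R'-R)/R'$; the constant in the statement is a convenient overestimate of this. Multiplying the combined per-point bound $\frac{|\xi|}{R-r-|\xi|}+\frac{K_{\alpha}|\xi|^{\alpha}}{(1-\alpha)r^{\alpha}}$ by this count yields the second term, carrying $N(R',f)+N(R',1/f)$ and the prefactor $\frac{2R'}{R'-R}$. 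The main obstacle is the singular integral of the previous paragraph: controlling $\int|\log|1+\xi/(z-a_{\mu})||\,d\phi$ uniformly over all admissible $a_{\mu}$, including those with $|a_{\mu}|\approx r$ lying near the circle $|z|=r$, which is precisely what forces the auxiliary exponent $\alpha$ and the constant $K_{\alpha}$.
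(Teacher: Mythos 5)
The paper does not prove this lemma; it is quoted verbatim from Chiang and Feng \cite{3} (their Theorem 2.1). Your reconstruction — Poisson--Jensen on $|w|=R$ at the two points $z$ and $z+\xi$, a gradient bound $2R/(R-r-|\xi|)^{2}$ on the Poisson kernel for the proximity term, the split of each zero/pole contribution into a reflected factor bounded by $|\xi|/(R-r-|\xi|)$ and a singular factor handled via $\log(1+t)\le t^{\alpha}/\alpha$ together with $\frac{1}{2\pi}\int_{0}^{2\pi}|re^{i\phi}-a|^{-\alpha}\,d\phi\le C(1-\alpha)^{-1}r^{-\alpha}$, and finally $n(R,\cdot)\le \frac{R'}{R'-R}N(R',\cdot)$ — is precisely the argument in that source, and it is correct in all essentials.
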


\begin{lemma}
\label{L1.9} Let $\xi _{1},\xi _{2}$ be two complex numbers such that $\xi
_{1}\neq \xi _{2}$ and suppose $f$ be of finite $(p,q)$-order meromorphic
function. Consider the $(p,q)$-order of $f$ as $\rho _{f}\left( p,q\right)
=\rho <+\infty .$ Then for each $\varepsilon >0,$ we have%
\begin{equation*}
i)\text{ }m\left( r,\frac{f\left( z+\xi _{1}\right) }{f\left( z+\xi
_{2}\right) }\right) =O\left( r^{\rho -1+\varepsilon }\right) ,
\end{equation*}%
for $p=q=1,$ and%
\begin{equation*}
ii)\text{ }m\left( r,\frac{f\left( z+\xi _{1}\right) }{f\left( z+\xi
_{2}\right) }\right) =O\left( \exp _{p-1}\left[ \left\{ \log _{q-1}\left(
r\right) \right\} ^{\rho +\varepsilon }\right] \right) ,
\end{equation*}%
for $p\geq q\geq 2.$
\end{lemma}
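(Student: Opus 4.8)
The plan is to reduce the ratio of two shifts to a product of single shifts and then invoke the explicit estimate of Lemma~\ref{L1.8}, which, unlike Lemma~\ref{L1.6}, holds for \emph{every} sufficiently large $r$ and hence produces the exceptional-set-free conclusion we are after. As in Lemmas~\ref{L1.3} and \ref{L1.4}, part $i)$ (the case $p=q=1$) can be quoted from \cite{3}; the argument below treats part $ii)$ in detail but in fact covers $i)$ verbatim. First I would write
\[
\frac{f(z+\xi_1)}{f(z+\xi_2)}=\frac{f(z+\xi_1)}{f(z)}\cdot\frac{f(z)}{f(z+\xi_2)},
\]
so that, by subadditivity of the proximity function,
\[
m\left(r,\frac{f(z+\xi_1)}{f(z+\xi_2)}\right)\le m\left(r,\frac{f(z+\xi_1)}{f(z)}\right)+m\left(r,\frac{f(z)}{f(z+\xi_2)}\right).
\]
Each term on the right is dominated by the full symmetric sum appearing on the left-hand side of Lemma~\ref{L1.8}, applied once with $\xi=\xi_1$ and once with $\xi=\xi_2$.

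Next I would feed in a concrete admissible pair of auxiliary radii, say $R=2r$ and $R'=4r$, which satisfy $\max\{1,r+|\xi|\}<R<R'$ for all large $r$. With this choice the denominators in Lemma~\ref{L1.8} behave like $(R-r-|\xi|)^2\sim r^2$, $R'-R=2r$, and $R-r-|\xi|\sim r$; after bounding $m(R,\cdot)\le T(R,\cdot)$, $N(R',\cdot)\le T(R',\cdot)$ and invoking the First Fundamental Theorem $T(r,\tfrac1f)=T(r,f)+O(1)$, the entire right-hand side collapses to
\[
m\left(r,\frac{f(z+\xi_1)}{f(z+\xi_2)}\right)=O\left(\frac{T(2r,f)}{r}\right)+O\left(\frac{T(4r,f)}{r^\alpha}\right)=O\left(\frac{T(4r,f)}{r^\alpha}\right),
\]
where $0<\alpha<1$ is the free parameter of Lemma~\ref{L1.8}; the second term absorbs the first because $r^{-\alpha}>r^{-1}$ and $T(2r,f)\le T(4r,f)$.

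It then remains to insert the growth estimate coming from finite $(p,q)$-order. Fixing $\varepsilon'<\varepsilon$, for all large $r$ one has, exactly as in $(\ref{1.6})$, $T(r,f)\le\exp_{p-1}\{(\log_{q-1}r)^{\rho+\varepsilon'}\}$. For part $ii)$ ($p\ge q\ge 2$) I substitute this with argument $4r$: since $\log_{q-1}(4r)\sim\log_{q-1}r$ and $\varepsilon'<\varepsilon$, the inequality $(\log_{q-1}(4r))^{\rho+\varepsilon'}\le(\log_{q-1}r)^{\rho+\varepsilon}$ holds for large $r$, and because $r^{-\alpha}\le 1$ with $\exp_{p-1}$ increasing we get $T(4r,f)/r^\alpha\le\exp_{p-1}\{(\log_{q-1}r)^{\rho+\varepsilon}\}$, which is the asserted bound. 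For part $i)$ ($p=q=1$) the order estimate reads $T(r,f)\le r^{\rho+\varepsilon'}$, so $T(4r,f)/r^\alpha=O(r^{\rho+\varepsilon'-\alpha})$; choosing $\alpha$ close to $1$ and $\varepsilon'$ small so that $\varepsilon'+(1-\alpha)<\varepsilon$ turns the exponent into $\rho-1+\varepsilon$.

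The constant-chasing inside Lemma~\ref{L1.8} and the First Fundamental Theorem reductions are routine. The one point requiring genuine care is the origin of the ``$-1$'' in the exponent of part $i)$: it is produced entirely by the factor $r^{-\alpha}$ with $\alpha$ pushed arbitrarily close to $1$, which is precisely what the admissible range $0<\alpha<1$ in Lemma~\ref{L1.8} allows. Equivalently, the main obstacle is the bookkeeping needed to absorb all the $4r$-versus-$r$ shifts in the argument of $T$, together with the $r^{-\alpha}$ saving, into a single enlarged exponent $\rho+\varepsilon$ (in case $ii)$) or into the gain of one unit in the power of $r$ (in case $i)$), uniformly in $r$ and with no exceptional set.
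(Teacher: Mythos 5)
Your proposal is correct and follows essentially the same route as the paper: the paper likewise splits $m\left(r,\frac{f(z+\xi_1)}{f(z+\xi_2)}\right)$ into single-shift proximity functions dominated by the symmetric sums of Lemma~\ref{L1.8}, plugs in explicit auxiliary radii to obtain a bound of the form $O(T(Cr,f)/r^{\alpha})$ (the paper gets $T(3r,f)$ where you get $T(4r,f)$ --- immaterial), and then inserts the $(p,q)$-order growth estimate with $\varepsilon/2$ in place of your $\varepsilon'$. The only cosmetic differences are your choice $R=2r$, $R'=4r$ and the fact that you also sketch part $i)$ via the $r^{-\alpha}$ saving, whereas the paper simply cites \cite{3} for that case.
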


\begin{proof}
We prove only second part of the lemma. First part follows from \cite{3}.

In the second part, for $p\geq q\geq 2$ we first write the following
expression as%
\begin{eqnarray}
m\left( r,\frac{f\left( z+\xi _{1}\right) }{f\left( z+\xi _{2}\right) }%
\right) &\leq &m\left( r,\frac{f\left( z+\xi _{1}\right) }{f\left( z\right) }%
\right) +m\left( r,\frac{f\left( z\right) }{f\left( z+\xi _{2}\right) }%
\right)  \notag \\
&\leq &m\left( r,\frac{f\left( z+\xi _{1}\right) }{f\left( z\right) }\right)
+m\left( r,\frac{f\left( z\right) }{f\left( z+\xi _{1}\right) }\right) 
\notag \\
&&+m\left( r,\frac{f\left( z\right) }{f\left( z+\xi _{2}\right) }\right)
+m\left( r,\frac{f\left( z+\xi _{2}\right) }{f\left( z\right) }\right) .
\label{1.8}
\end{eqnarray}

By Lemma $\ref{L1.8}$ and using the concept given in [\cite{1}, lemma 2.9],
we obtain\ from above%
\begin{equation}
m\left( r,\frac{f\left( z+\xi _{1}\right) }{f\left( z+\xi _{2}\right) }%
\right) \leq 4\left[ 
\begin{array}{c}
\frac{4\left\vert \xi _{1}\right\vert r}{\left( r-\left\vert \xi
_{1}\right\vert ^{2}\right) }+\frac{4\left\vert \xi _{2}\right\vert r}{%
\left( r-\left\vert \xi _{2}\right\vert ^{2}\right) } \\ 
+6\left( \frac{\left\vert \xi _{1}\right\vert }{\left( r-\left\vert \xi
_{1}\right\vert \right) }+\frac{\left\vert \xi _{2}\right\vert }{\left(
r-\left\vert \xi _{2}\right\vert \right) }\right) +\frac{2K_{\alpha }\left(
\left\vert \xi _{1}\right\vert ^{1-\frac{\varepsilon }{2}}+\left\vert \xi
_{2}\right\vert ^{1-\frac{\varepsilon }{2}}\right) }{\varepsilon r^{1-\frac{%
\varepsilon }{2}}}%
\end{array}%
\right] T(3r,f).  \label{1.9a}
\end{equation}

Now since the $(p,q)$-order of $f$ is $\rho _{f}\left( p,q\right) =\rho
<+\infty ,$ so given $0<\varepsilon <2,$ by definition we have%
\begin{equation*}
T\left( r,f\right) \leq \exp _{p-1}\left\{ \left\{ \log _{q-1}\left(
r\right) \right\} ^{\rho +\frac{\varepsilon }{2}}\right\} .
\end{equation*}

Using the above in $\left( \ref{1.9a}\right) $ we obtain%
\begin{eqnarray*}
&&m\left( r,\frac{f\left( z+\xi _{1}\right) }{f\left( z+\xi _{2}\right) }%
\right) \\
&\leq &4\left[ 
\begin{array}{c}
\frac{4\left\vert \xi _{1}\right\vert r}{\left( r-\left\vert \xi
_{1}\right\vert ^{2}\right) }+\frac{4\left\vert \xi _{2}\right\vert r}{%
\left( r-\left\vert \xi _{2}\right\vert ^{2}\right) } \\ 
+6\left( \frac{\left\vert \xi _{1}\right\vert }{\left( r-\left\vert \xi
_{1}\right\vert \right) }+\frac{\left\vert \xi _{2}\right\vert }{\left(
r-\left\vert \xi _{2}\right\vert \right) }\right) +\frac{2K_{\alpha }\left(
\left\vert \xi _{1}\right\vert ^{1-\frac{\varepsilon }{2}}+\left\vert \xi
_{2}\right\vert ^{1-\frac{\varepsilon }{2}}\right) }{\varepsilon r^{1-\frac{%
\varepsilon }{2}}}%
\end{array}%
\right] \exp _{p-1}\left[ \left\{ \log _{q-1}\left( 3r\right) \right\}
^{\rho +\frac{\varepsilon }{2}}\right] \\
&\leq &K\exp _{p-1}\left\{ \log _{q-1}r^{\rho +\varepsilon }\right\} ,\text{
where }K>0\text{ is a constant.}
\end{eqnarray*}

This completes the proof.
\end{proof}

\begin{lemma}
\label{L1.10} Let $D$ be a complex set satisfying $\overline{\log dens}%
\left\{ r=\left\vert z\right\vert :z\in D\right\} >0$ and let $%
A_{0}(z),A_{1}(z),....,A_{k}(z)$ be entire functions of $\left( p,q\right) $%
-order satisfying $\max\limits_{0\leq j\leq k}\left\{ \rho _{A_{j}}\left(
p,q\right) \right\} \leq \rho .$ If there exists an integer $l$ $\left(
0\leq l\leq k\right) $ such that for some constants $a,b$ $(0\leq b<a)$ and $%
\delta $ $\left( 0<\delta <\rho \right) $ sufficiently small with 
\begin{equation}
\left\vert A_{l}(z)\right\vert \geq \exp _{p}\left[ a\left\{ \log
_{q-1}\left( r\right) \right\} ^{\rho -\delta }\right]  \label{1.10}
\end{equation}%
\begin{equation*}
\text{ and }\left\vert A_{j}(z)\right\vert \leq \exp _{p}\left[ b\left\{
\log _{q-1}\left( r\right) \right\} ^{\rho -\delta }\right] ,\text{ }%
j=0,1,...,k,\text{ }j\neq l
\end{equation*}%
as $z\rightarrow \infty $ for $z\in D,$ then we have $\rho _{A_{l}}\left(
p,q\right) =\rho .$
\end{lemma}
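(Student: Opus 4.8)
The plan is to establish the two inequalities $\rho_{A_l}(p,q)\le\rho$ and $\rho_{A_l}(p,q)\ge\rho$ separately. The upper bound is immediate, being part of the hypothesis $\max_{0\le j\le k}\{\rho_{A_j}(p,q)\}\le\rho$. Hence all the work is in the reverse inequality, and for that only the growth estimate $(\ref{1.10})$ on $A_l$ and the positive upper logarithmic density of the radial projection of $D$ are needed; the upper bounds on the other coefficients $A_j$, $j\ne l$, play no role in this particular conclusion.

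First I would put $E=\{r=|z|:z\in D\}$, so that $\overline{\log dens}\,E>0$ by hypothesis; in particular $E$ has infinite logarithmic measure and is therefore unbounded. For each $r\in E$ there is a point $z$ with $|z|=r$ and $z\in D$ at which $(\ref{1.10})$ holds, and therefore by the definition of the maximum modulus
\[
M(r,A_l)\ \ge\ |A_l(z)|\ \ge\ \exp_p\!\left[a\{\log_{q-1}(r)\}^{\rho-\delta}\right],\qquad r\in E.
\]
Next I would apply $\log_{p+1}$ to both sides, which is legitimate for large $r\in E$ since the iterated logarithm is increasing. Using the telescoping identity $\log_{p+1}\exp_p X=\log X$ together with $\log(\log_{q-1}r)=\log_q r$, the right-hand side collapses to
\[
\log_{p+1}M(r,A_l)\ \ge\ \log a+(\rho-\delta)\log_q r,
\]
a computation uniform in the admissible pairs $(p,q)$. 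Dividing by $\log_q r$ and passing to the limit along $E$ gives
\[
\rho_{A_l}(p,q)=\limsup_{r\to\infty}\frac{\log_{p+1}M(r,A_l)}{\log_q r}\ \ge\ \limsup_{\substack{r\to\infty\\ r\in E}}\frac{\log_{p+1}M(r,A_l)}{\log_q r}\ \ge\ \rho-\delta,
\]
where the first inequality holds because the limit superior over an unbounded subset cannot exceed the full limit superior.

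Since $(\ref{1.10})$ is assumed for every sufficiently small $\delta>0$, letting $\delta\to0$ yields $\rho_{A_l}(p,q)\ge\rho$, which combined with the upper bound gives $\rho_{A_l}(p,q)=\rho$. The one delicate point, and the step I expect to be the main obstacle, is precisely this passage from a fixed estimate, which only delivers $\rho-\delta$, to the sharp value $\rho$: it forces $\delta$ to range over all small positive reals rather than being held fixed, which is the purpose of the clause ``$\delta$ sufficiently small'' in the hypothesis. The remaining manipulations of the iterated exponentials and logarithms are entirely routine.
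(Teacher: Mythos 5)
Your proof is correct and follows essentially the same route as the paper's: both rest solely on the lower bound $(\ref{1.10})$ along the unbounded set of radii coming from $D$ to force $\rho _{A_{l}}\left( p,q\right) \geq \rho -\delta $, the paper phrasing this as a contradiction with an assumed order $\alpha <\rho $ while you compute the $\limsup $ of $\log _{p+1}M(r,A_{l})/\log _{q}r$ directly. Your explicit observation that one must let $\delta $ range over all sufficiently small positive values to pass from $\rho -\delta $ to $\rho $ is also needed, implicitly, in the paper's argument (via the constraint $0<\varepsilon <\rho -\alpha -2\delta $), so the two proofs coincide in substance.
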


\begin{proof}
By the stated condition we have $\rho _{A_{l}}\left( p,q\right) \leq \rho .$
Let $\rho _{A_{l}}\left( p,q\right) =\alpha <\rho .$

Then for given $\varepsilon $ and sufficiently large $r,$ by definition we
have%
\begin{equation*}
\left\vert A_{l}(z)\right\vert \leq \exp _{p}\left[ \left\{ \log
_{q-1}\left( r\right) \right\} ^{\alpha +\varepsilon }\right]
\end{equation*}

Again by $\left( \ref{1.10}\right) $,%
\begin{equation*}
\left\vert A_{l}(z)\right\vert \geq \exp _{p}\left[ a\left\{ \log
_{q-1}\left( r\right) \right\} ^{\rho -\delta }\right] .
\end{equation*}

Combining the above two for $z\in D,$ $\left\vert z\right\vert \rightarrow
+\infty ,$ we obtain%
\begin{equation*}
\exp _{p}\left[ a\left\{ \log _{q-1}\left( r\right) \right\} ^{\rho -\delta }%
\right] \leq \left\vert A_{l}(z)\right\vert \leq \exp _{p}\left[ \left\{
\log _{q-1}\left( r\right) \right\} ^{\alpha +\varepsilon }\right] ,
\end{equation*}%
where $\varepsilon $ is arbitrary and $0<\varepsilon <\rho -\alpha -2\delta
, $ which is a contradiction as $r\rightarrow +\infty .$

Hence $\rho _{A_{l}}\left( p,q\right) =\rho .$
\end{proof}

\begin{lemma}
\label{L1.11} Let $D$ be a complex set satisfying $\overline{\log dens}%
\left\{ r=\left\vert z\right\vert :z\in D\right\} >0$ and let $%
A_{0}(z),A_{1}(z),....,A_{k}(z)$ be entire functions of $\left( p,q\right) -$%
order satisfying $\max\limits_{0\leq j\leq k}\left\{ \rho _{A_{j}}\left(
p,q\right) \right\} \leq \rho .$ If there exists an integer $l$ $\left(
0\leq l\leq k\right) $ such that for some constants $a,b$ $(0\leq b<a)$ and $%
\delta $ $\left( 0<\delta <\rho \right) $ sufficiently small with 
\begin{equation*}
T(r,A_{l})\geq \exp _{p-1}\left[ a\left\{ \log _{q-1}\left( r\right)
\right\} ^{\rho -\delta }\right]
\end{equation*}%
\begin{equation*}
\text{and }T(r,A_{j})\leq \exp _{p-1}\left[ b\left\{ \log _{q-1}\left(
r\right) \right\} ^{\rho -\delta }\right] ,\text{ }j=0,1,...,k,\text{ }j\neq
l
\end{equation*}%
as $z\rightarrow \infty $ for $z\in D,$ then we have $\rho _{A_{l}}\left(
p,q\right) =\rho .$
\end{lemma}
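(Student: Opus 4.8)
The plan is to follow the scheme of Lemma \ref{L1.10} almost verbatim, replacing the modulus $|A_{l}(z)|$ by the characteristic $T(r,A_{l})$ and the iterated exponential $\exp _{p}$ by $\exp _{p-1}$. This is precisely the translation that carries the $M(r,f)$-form of the $(p,q)$-order into its $T(r,f)$-form, so I do not expect to need any genuinely new ingredient.

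First I would record that the hypothesis $\max_{0\leq j\leq k}\{\rho _{A_{j}}(p,q)\}\leq \rho $ already gives $\rho _{A_{l}}(p,q)\leq \rho $, and then argue by contradiction: suppose $\rho _{A_{l}}(p,q)=\alpha <\rho $. The one computation to carry out with care is to turn this assumed order into a $T$-bound. Since
\[
\rho _{A_{l}}(p,q)=\limsup_{r\rightarrow \infty }\frac{\log _{p}T(r,A_{l})}{\log _{q}r}=\alpha ,
\]
for any $\varepsilon >0$ and all sufficiently large $r$ one has $\log _{p}T(r,A_{l})<(\alpha +\varepsilon )\log _{q}r$; using the identity $\exp [(\alpha +\varepsilon )\log _{q}r]=(\log _{q-1}r)^{\alpha +\varepsilon }$ and iterating the exponential, this rearranges to
\[
T(r,A_{l})\leq \exp _{p-1}\!\left[ \{\log _{q-1}(r)\}^{\alpha +\varepsilon }\right] .
\]
This is the analogue of the upper bound used in Lemma \ref{L1.10}, but one exponential level lower, matching the form in which the hypotheses of the present lemma are stated.

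Then I would pair this with the assumed lower bound $T(r,A_{l})\geq \exp _{p-1}[a\{\log _{q-1}(r)\}^{\rho -\delta }]$, valid for $z\in D$ as $|z|=r\rightarrow \infty $. Since $\exp _{p-1}$ is increasing, the two estimates reduce to $a\{\log _{q-1}(r)\}^{\rho -\delta }\leq \{\log _{q-1}(r)\}^{\alpha +\varepsilon }$, with $a>0$ from $0\leq b<a$. Choosing $\varepsilon $ with $0<\varepsilon <\rho -\alpha -2\delta $ forces the exponent on the left to exceed that on the right, so the inequality must fail for all large $r$; and because $\overline{\log dens}\{r=|z|:z\in D\}>0$ there are arbitrarily large admissible $r$ with $z\in D$, so the contradiction is genuinely reached along a sequence $r\rightarrow \infty $. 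Hence $\rho _{A_{l}}(p,q)=\rho $.

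As for difficulty, I expect essentially no obstacle: the argument is mechanical once Lemma \ref{L1.10} is in hand. The only points demanding a moment's attention are the exponent bookkeeping in the displayed estimate above—that the $T$-based definition produces $\{\log _{q-1}r\}^{\alpha +\varepsilon }$ inside $\exp _{p-1}$ rather than inside $\exp _{p}$—and the (standard) fact that for entire functions the $M$- and $T$-definitions of $(p,q)$-order coincide, so that the assumption $\rho _{A_{l}}(p,q)=\alpha $ may legitimately be used through the $T$-form.
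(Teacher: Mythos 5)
Your proposal is correct and is exactly the argument the paper intends: the paper's own "proof" of Lemma \ref{L1.11} simply says it follows from Lemma \ref{L1.10} and omits the details, and your write-up is the faithful translation of that lemma's contradiction argument to the $T(r,\cdot)$ and $\exp_{p-1}$ setting, with the exponent bookkeeping $(\alpha+\varepsilon<\rho-2\delta<\rho-\delta)$ done correctly. No gaps.
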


\begin{proof}
The proof follows from the previous lemma, hence we omit it.
\end{proof}

\section{Main Results}

In this section we establish our main results.

\begin{theorem}
\label{T2.1} Let $D$ be a complex set satisfying $\overline{\log dens}%
\left\{ r=\left\vert z\right\vert :z\in D\right\} >0$ and let $%
A_{0}(z),A_{1}(z),....,A_{k}(z)$ be entire functions of $\left( p,q\right) $%
-order satisfying $\max\limits_{0\leq j\leq k}\left\{ \rho _{A_{j}}\left(
p,q\right) \right\} \leq \rho .$ If there exists an integer $l$ $\left(
0\leq l\leq k\right) $ such that for some constants $a,b$ $(0\leq b<a)$ and $%
\delta $ $\left( 0<\delta <\rho \right) $ sufficiently small with 
\begin{equation}
\left\vert A_{l}(z)\right\vert \geq \exp _{p}\left[ a\left\{ \log
_{q-1}\left( r\right) \right\} ^{\rho -\delta }\right]  \label{2.1}
\end{equation}%
\begin{equation}
\left\vert A_{j}(z)\right\vert \leq \exp _{p}\left[ b\left\{ \log
_{q-1}\left( r\right) \right\} ^{\rho -\delta }\right] ,\text{ }j=0,1,...,k,%
\text{ }j\neq l  \label{2.2}
\end{equation}%
as $z\rightarrow \infty $ for $z\in D,$ then every meromorphic solution $%
f\left( \neq 0\right) $ of equation $\left( \ref{1h}\right) $ satisfies

$\left( i\right) $ $\rho _{f}\geq \rho _{A_{l}}+1,$ for $p=1,$ $q=1.$

$\left( ii\right) $ $\rho _{f}\left( p,q\right) \geq \rho _{A_{l}}\left(
p,q\right) ,$ for $p\geq q\geq 2.$
\end{theorem}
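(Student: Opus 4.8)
The plan is to argue by contradiction in each case, exploiting that $A_l$ is the dominant coefficient. First note that if $\rho_f(p,q)=+\infty$ both asserted inequalities hold trivially, so I may assume that the nonzero meromorphic solution $f$ of $(\ref{1h})$ has finite $(p,q)$-order, and set $\sigma=\rho_f(p,q)$. Rewriting $(\ref{1h})$ so as to isolate the dominant term and dividing by $f(z+c_l)$, I obtain
\[
-A_l(z)=\sum_{j\neq l}A_j(z)\,\frac{f(z+c_j)}{f(z+c_l)},
\]
and hence the pointwise bound
\[
|A_l(z)|\leq\sum_{j\neq l}|A_j(z)|\left|\frac{f(z+c_j)}{f(z+c_l)}\right|.
\]

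To the quotients $f(z+c_j)/f(z+c_l)$, whose shifts $c_j\neq c_l$ are distinct, I would apply Lemma $\ref{L1.4}$: for each fixed $\varepsilon>0$ there is a set $S\subset(1,+\infty)$ of finite logarithmic measure off which $\left|f(z+c_j)/f(z+c_l)\right|\leq\exp_p[\{\log_{q-1}r\}^{\sigma+\varepsilon}]$ when $p\geq q\geq 2$, respectively $\leq\exp[r^{\sigma-1+\varepsilon}]$ when $p=q=1$. Combining this with the hypotheses $(\ref{2.1})$ and $(\ref{2.2})$, valid for $z\in D$ with $|z|=r$ large, gives for $z\in D$ with $r=|z|\notin S\cup[0,1]$ the inequality
\[
\exp_p[a\{\log_{q-1}r\}^{\rho-\delta}]\leq k\,\exp_p[b\{\log_{q-1}r\}^{\rho-\delta}]\,\exp_p[\{\log_{q-1}r\}^{\sigma+\varepsilon}]
\]
in the case $p\geq q\geq 2$, and the analogue with single exponentials and shifted exponent $\sigma-1+\varepsilon$ in the case $p=q=1$.

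The crucial structural point is that $\overline{\log dens}\{r=|z|:z\in D\}>0$ while $S$ carries only finite logarithmic measure; hence the admissible radii persist to $+\infty$ and the displayed inequality holds along a sequence $r\to\infty$. Writing $L=\log_{q-1}r$ and taking one logarithm in the case $p\geq q\geq 2$ reduces matters to $\exp_{p-1}[aL^{\rho-\delta}]\leq\log k+\exp_{p-1}[bL^{\rho-\delta}]+\exp_{p-1}[L^{\sigma+\varepsilon}]$. Since $a>b$ and $\exp_{p-1}$ grows rapidly, the left side dominates the first two right-hand terms; assuming $\sigma<\rho$ and choosing $\varepsilon$ together with the small $\delta$ allowed by hypothesis so that $\sigma+\varepsilon<\rho-\delta$, the left side also dominates $\exp_{p-1}[L^{\sigma+\varepsilon}]$, a contradiction as $r\to\infty$. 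This forces $\sigma\geq\rho$, and since Lemma $\ref{L1.10}$ gives $\rho_{A_l}(p,q)=\rho$, part $(ii)$ follows.

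For part $(i)$ the same scheme applies with single exponentials, but now the quotient estimate carries the shifted exponent $\sigma-1+\varepsilon$; taking logarithms yields $(a-b)r^{\rho-\delta}\leq\log k+r^{\sigma-1+\varepsilon}$, so consistency as $r\to\infty$ forces $\rho-\delta\leq\sigma-1+\varepsilon$, i.e. $\sigma\geq\rho+1-\delta-\varepsilon$; letting $\varepsilon\to 0$ and using that $\delta$ may be taken arbitrarily small gives $\sigma\geq\rho+1=\rho_{A_l}+1$. The step I expect to be most delicate is exactly this extraction of the order inequality: one must first guarantee that the positive-upper-log-density set $D$ survives removal of the finite-log-measure exceptional set of Lemma $\ref{L1.4}$, so that the key inequality genuinely holds along radii tending to infinity, and then carry out the comparison of the nested $\exp_p$-terms carefully enough that the gap $a>b$ together with $\rho-\delta>\sigma+\varepsilon$ (respectively the unit shift) really produces the contradiction, all while tracking the distinct roles of the free parameter $\varepsilon$ and the fixed-but-small $\delta$.
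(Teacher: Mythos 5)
Your proposal is correct and follows essentially the same route as the paper: divide (\ref{1h}) by $f(z+c_l)$, bound the quotients $f(z+c_j)/f(z+c_l)$ via Lemma \ref{L1.4} with $\varepsilon$ chosen so that $\sigma+\varepsilon<\rho-\delta$ (the paper uses $\rho-2\delta$), observe that the positive upper logarithmic density of $D$ survives deletion of the finite-logarithmic-measure exceptional set, derive the contradiction from $a>b$, and invoke Lemma \ref{L1.10} to identify $\rho_{A_l}(p,q)=\rho$. The only difference is that you sketch case $p=q=1$ explicitly, whereas the paper delegates it to \cite{1}.
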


\begin{proof}
For $p=q=1,$ see \cite{1}. We consider the case when $p\geq q\geq 2.$

First let $f$ $\left( \neq 0\right) $ be a meromorphic solution of $\left( %
\ref{1h}\right) $ and if possible let $\rho _{f}\left( p,q\right) <\rho .$

Now divide $\left( \ref{1h}\right) $ by $f(z+c_{l})$ we get%
\begin{equation}
-A_{l}(z)=A_{k}(z)\frac{f(z+c_{k})}{f(z+c_{l})}+...+A_{l-1}(z)\frac{%
f(z+c_{l-1})}{f(z+c_{l})}+...+A_{1}(z)\frac{f(z+c_{1})}{f(z+c_{l})}+A_{0}(z)%
\frac{f(z)}{f(z+c_{l})}.  \label{2.3}
\end{equation}

The above expression can be written as%
\begin{equation*}
-1=\dsum\limits_{j=1,j\neq l}^{k}\frac{A_{j}(z)f(z+c_{j})}{A_{l}(z)f(z+c_{l})%
}+\frac{A_{0}(z)f(z)}{A_{l}(z)f(z+c_{l})}.
\end{equation*}

The above implies%
\begin{equation}
1\leq \dsum\limits_{j=1,j\neq l}^{k}\left\vert \frac{A_{j}(z)f(z+c_{j})}{%
A_{l}(z)f(z+c_{l})}\right\vert +\left\vert \frac{A_{0}(z)f(z)}{%
A_{l}(z)f(z+c_{l})}\right\vert .  \label{2.3A}
\end{equation}

By Lemma $\ref{L1.4}$ $\left( ii\right) $, for any given $\varepsilon >0$ $%
\left( \varepsilon <\rho -\rho _{f}\left( p,q\right) -2\delta \right) ,$
there exists a subset $S\subset \left( 1,+\infty \right) $ of finite
logarithmic measure such that for all $\left\vert z\right\vert =r\notin
S\cup \left[ 0,1\right] ,$ we have%
\begin{equation}
\left\vert \frac{f(z+c_{j})}{f(z+c_{l})}\right\vert \leq \exp _{p}\left[
\left\{ \log _{q-1}\left( r\right) \right\} ^{\rho _{f}\left( p,q\right)
+\varepsilon }\right] <\exp _{p}\left[ \left\{ \log _{q-1}\left( r\right)
\right\} ^{\rho -2\delta }\right] ,\left( j\neq l,j=1,2,....,k\right)
\label{2.4}
\end{equation}%
and%
\begin{equation}
\left\vert \frac{f(z)}{f(z+c_{l})}\right\vert \leq \exp _{p}\left[ \left\{
\log _{q-1}\left( r\right) \right\} ^{\rho _{f}\left( p,q\right)
+\varepsilon }\right] <\exp _{p}\left[ \left\{ \log _{q-1}\left( r\right)
\right\} ^{\rho -2\delta }\right] .  \label{2.5}
\end{equation}

Now $D$ is a complex set satisfying $\overline{\log dens}\left\{
r=\left\vert z\right\vert :z\in D\right\} >0$ and for $\left\vert
z\right\vert \rightarrow +\infty ,$ we have $\left( \ref{2.1}\right) $ and $%
\left( \ref{2.2}\right) $. Therefore we set $D_{1}=\left\{ r=\left\vert
z\right\vert :z\in D\right\} .$

Since $\overline{\log dens}\left\{ r=\left\vert z\right\vert :z\in D\right\}
>0,$ thus $D_{1}$ is a set of $r$ with $\dint\limits_{D_{1}}\frac{dr}{r}%
=\infty .$

Now for $z\in D_{1}\backslash S\cup \left[ 0,1\right] ,$ substituting $%
\left( \ref{2.1}\right) $,$\left( \ref{2.2}\right) $,$\left( \ref{2.4}%
\right) $ and $\left( \ref{2.5}\right) $ in $\left( \ref{2.3A}\right) $, we
obtain%
\begin{equation*}
1\leq k\frac{\exp _{p}\left[ b\left\{ \log _{q-1}\left( r\right) \right\}
^{\rho -\delta }\right] }{\exp _{p}\left[ a\left\{ \log _{q-1}\left(
r\right) \right\} ^{\rho -\delta }\right] }.\exp _{p}\left[ \left\{ \log
_{q-1}\left( r\right) \right\} ^{\rho -2\delta }\right] \rightarrow 0\text{
as }r\rightarrow \infty .
\end{equation*}

The above expression leads to a contradiction.

Hence we get $\rho _{f}\left( p,q\right) \geq \rho .$

Again by Lemma $\ref{L1.10}$ we know $\rho _{A_{l}}\left( p,q\right) =\rho ,$
hence $\rho _{f}\left( p,q\right) \geq \rho _{A_{l}}\left( p,q\right) .$
\end{proof}

\begin{theorem}
\label{T2.2} Let $D$ be a complex set satisfying $\overline{\log dens}%
\left\{ r=\left\vert z\right\vert :z\in D\right\} >0$ and let $%
A_{0}(z),A_{1}(z),....,A_{k}(z)$ be entire functions satisfying $%
\max\limits_{0\leq j\leq k}\left\{ \rho _{A_{j}}\left( p,q\right) \right\}
\leq \rho .$ If there exists an integer $l$ $\left( 0\leq l\leq k\right) $
such that for some constants $a,b$ $(0\leq b<a)$ and $\delta $ $\left(
0<\delta <\rho \right) $ sufficiently small with 
\begin{equation}
T(r,A_{l})\geq \exp _{p-1}\left[ \left\{ \log _{q-1}\left( r\right) \right\}
^{\rho -\delta }\right]  \label{2.6}
\end{equation}%
\begin{equation}
T(r,A_{j})\leq \exp _{p-1}\left[ \left\{ \log _{q-1}\left( r\right) \right\}
^{\rho -\delta }\right] ,\text{ }j=0,1,...,k,\text{ }j\neq l  \label{2.7}
\end{equation}%
as $z\rightarrow \infty $ for $z\in D,$ then every meromorphic solution $%
f\left( \neq 0\right) $ of equation $\left( \ref{1h}\right) $ satisfies

$\left( i\right) $ $\rho _{f}\geq \rho _{A_{l}}+1,$ for $p=1,$ $q=1$ and $%
0\leq kb<a.$

$\left( ii\right) $ $\rho _{f}\left( p,q\right) \geq \rho _{A_{l}}\left(
p,q\right) ,$ for $p\geq q\geq 2$ \ and $0\leq b<a.$
\end{theorem}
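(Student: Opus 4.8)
The plan is to imitate the proof of Theorem \ref{T2.1}, but to replace the pointwise modulus bounds by bounds on the Nevanlinna characteristic, exploiting that the coefficients are entire. The key simplification is that $N(r,A_j)=0$, so $T(r,A_j)=m(r,A_j)$ for every $j$; this lets the proximity estimate of Lemma \ref{L1.9} take over the role that Lemma \ref{L1.4} played before. I shall treat part $(ii)$, the case $p\geq q\geq 2$; part $(i)$ runs along identical lines with Lemma \ref{L1.9}$(i)$ replacing $(ii)$, as in \cite{1}. Throughout I read (\ref{2.6}) and (\ref{2.7}) with their constants $a,b$ in place, i.e. as $T(r,A_l)\geq \exp_{p-1}[a\{\log_{q-1}r\}^{\rho-\delta}]$ and $T(r,A_j)\leq \exp_{p-1}[b\{\log_{q-1}r\}^{\rho-\delta}]$, in agreement with Lemma \ref{L1.11}.

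Suppose, for contradiction, that some meromorphic solution $f(\neq 0)$ of (\ref{1h}) has $\rho_f(p,q)<\rho$. Writing $c_0=0$ and dividing (\ref{1h}) by $f(z+c_l)$ isolates the dominant coefficient as
\begin{equation*}
-A_l(z)=\sum_{j=0,\,j\neq l}^{k}A_j(z)\frac{f(z+c_j)}{f(z+c_l)}.
\end{equation*}
Applying the proximity function, using its subadditivity together with $m(r,uv)\leq m(r,u)+m(r,v)$ and $T(r,A_j)=m(r,A_j)$, I obtain
\begin{equation*}
T(r,A_l)\leq \sum_{j\neq l}T(r,A_j)+\sum_{j\neq l}m\left(r,\frac{f(z+c_j)}{f(z+c_l)}\right)+O(1).
\end{equation*}

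Each of the $k$ ratio terms is then estimated by Lemma \ref{L1.9}$(ii)$, which gives $m(r,f(z+c_j)/f(z+c_l))=O(\exp_{p-1}[\{\log_{q-1}r\}^{\rho_f(p,q)+\varepsilon}])$ for every $\varepsilon>0$ and all large $r$; it is important here that this estimate carries no exceptional set. Inserting the hypotheses, valid on $D_1=\{r=|z|:z\in D\}$, yields
\begin{equation*}
\exp_{p-1}\left[a\left\{\log_{q-1}r\right\}^{\rho-\delta}\right]\leq k\exp_{p-1}\left[b\left\{\log_{q-1}r\right\}^{\rho-\delta}\right]+O\left(\exp_{p-1}\left[\left\{\log_{q-1}r\right\}^{\rho_f(p,q)+\varepsilon}\right]\right)+O(1)
\end{equation*}
for $r\in D_1$. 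Since $\overline{\log dens}\,D_1>0$, the set $D_1$ has infinite logarithmic measure, so I may let $r\to\infty$ through $D_1$.

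The decisive step, and the main obstacle, is comparing the two sides. For $p\geq 2$ the operator $\exp_{p-1}$ is a genuine iterated exponential, so after choosing $\delta$ small enough that $\rho_f(p,q)<\rho-\delta$ and then $\varepsilon$ with $\rho_f(p,q)+\varepsilon<\rho-\delta$, I apply $\log_{p-1}$ to both sides: the left side becomes $a\{\log_{q-1}r\}^{\rho-\delta}$, while the right side is at most $(b+o(1))\{\log_{q-1}r\}^{\rho-\delta}$, the middle summand being of strictly lower order since $\rho_f(p,q)+\varepsilon<\rho-\delta$. As $a>b$, this is a contradiction for large $r\in D_1$. Note that the multiplicative constant $k$ is absorbed by the outer exponential and plays no role; this is exactly why $(ii)$ needs only $b<a$, whereas in the flat case $p=q=1$ one compares polynomials $r^{\rho-\delta}$ of equal degree, the factor $k$ survives, and one must assume $kb<a$. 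This is where parts $(i)$ and $(ii)$ genuinely diverge. The contradiction forces $\rho_f(p,q)\geq\rho$, and Lemma \ref{L1.11} then gives $\rho_{A_l}(p,q)=\rho$, so $\rho_f(p,q)\geq\rho_{A_l}(p,q)$. For $(i)$ one assumes $\rho_f<\rho+1$ and argues identically using $m(r,f(z+c_j)/f(z+c_l))=O(r^{\rho_f-1+\varepsilon})$ from Lemma \ref{L1.9}$(i)$, the hypothesis $kb<a$ being precisely what defeats the surviving factor $k$.
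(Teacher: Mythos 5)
Your proposal is correct and follows essentially the same route as the paper: divide (\ref{1h}) by $f(z+c_l)$, use $T(r,A_j)=m(r,A_j)$ for entire coefficients, control the shift quotients by Lemma \ref{L1.9}$(ii)$, insert the hypotheses on $D$, and conclude via Lemma \ref{L1.11}. Your reading of (\ref{2.6})--(\ref{2.7}) with the constants $a,b$ restored matches what the paper actually uses in its proof, and your explicit $\log_{p-1}$ comparison (absorbing the factor $k$ for $p\geq 2$, which is why only $b<a$ is needed there versus $kb<a$ when $p=q=1$) is just a more carefully justified version of the paper's passage from (\ref{2.9}) to the final contradiction.
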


\begin{proof}
For $p=q=1,$ see \cite{1}. We consider the case when $p\geq q\geq 2.$

First let $f$ $\left( \neq 0\right) $ be a meromorphic solution of $\left( %
\ref{1h}\right) $ and if possible let $\rho _{f}\left( p,q\right) <\rho .$

Now since $A_{0}(z),A_{1}(z),....,A_{k}(z)$ are entire, by $\left( \ref{2.3}%
\right) $ we have%
\begin{eqnarray}
m(r,A_{l}) &=&T(r,A_{l})  \notag \\
&\leq &\dsum\limits_{j=0,j\neq l}^{k}m(r,A_{j})+\dsum\limits_{j=1,j\neq
l}^{k}m\left( r,\frac{f(z+c_{j})}{f(z+c_{l})}\right) +m\left( r,\frac{f(z)}{%
f(z+c_{l})}\right) +O(1)  \notag \\
&=&\dsum\limits_{j=0,j\neq l}^{k}T(r,A_{j})+\dsum\limits_{j=1,j\neq
l}^{k}m\left( r,\frac{f(z+c_{j})}{f(z+c_{l})}\right) +m\left( r,\frac{f(z)}{%
f(z+c_{l})}\right) +O(1).  \label{2.7A}
\end{eqnarray}

For any given $\varepsilon $ $\left( 0<\varepsilon <\rho -\rho _{f}\left(
p,q\right) -2\delta \right) ,$ from Lemma $\left( \ref{L1.9}\right) $ the
above implies%
\begin{eqnarray}
T(r,A_{l}) &\leq &\dsum\limits_{j=0,j\neq
l}^{k}T(r,A_{j})+\dsum\limits_{j=1,j\neq l}^{k}\exp _{p-1}\left\{ \left\{
\log _{q-1}\left( r\right) \right\} ^{\rho _{f}\left( p,q\right)
+\varepsilon }\right\}  \label{2.8} \\
&&+\exp _{p-1}\left\{ \left\{ \log _{q-1}\left( r\right) \right\} ^{\rho
_{f}\left( p,q\right) +\varepsilon }\right\} +O(1).  \notag
\end{eqnarray}

Substituting $\left( \ref{2.6}\right) $ and $\left( \ref{2.7}\right) $ in $%
\left( \ref{2.8}\right) $, we obtain%
\begin{gather}
\exp _{p-1}\left[ \left\{ a\log _{q-1}\left( r\right) \right\} ^{\rho
-\delta }\right] \leq \dsum\limits_{j=0,j\neq l}^{k}\exp _{p-1}\left[
b\left\{ \log _{q-1}\left( r\right) \right\} ^{\rho -\delta }\right]  \notag
\\
+\dsum\limits_{j=1,j\neq l}^{k}\exp _{p-1}\left\{ \left\{ \log _{q-1}\left(
r\right) \right\} ^{\rho _{f}\left( p,q\right) +\varepsilon }\right\} +\exp
_{p-1}\left\{ \left\{ \log _{q-1}\left( r\right) \right\} ^{\rho _{f}\left(
p,q\right) +\varepsilon }\right\} +O(1)  \notag \\
\leq k\exp _{p-1}\left[ b\left\{ \log _{q-1}\left( r\right) \right\} ^{\rho
-\delta }\right] +k\exp _{p-1}\left\{ \left\{ \log _{q-1}\left( r\right)
\right\} ^{\rho _{f}\left( p,q\right) +\varepsilon }\right\} +O(1).
\label{2.9}
\end{gather}

By $\left( \ref{2.9}\right) $ it follows%
\begin{equation*}
(a-b)\left\{ \log _{q-1}\left( r\right) \right\} ^{\rho -\delta }\leq
\left\{ \log _{q-1}\left( r\right) \right\} ^{\rho _{f}\left( p,q\right)
+\varepsilon }+O(1).
\end{equation*}

Since $(a-b)>0,$ the above implies%
\begin{equation*}
1\leq \frac{\left\{ \log _{q-1}\left( r\right) \right\} ^{\rho _{f}\left(
p,q\right) +\varepsilon -\rho +\delta }}{(a-b)}+\frac{O(1)}{(a-b)\left\{
\log _{q-1}\left( r\right) \right\} ^{\rho -\delta }}\rightarrow 0\text{ as }%
r\rightarrow +\infty ,
\end{equation*}%
which is a contradiction.

Again by Lemma $\ref{L1.11}$ it follows that $\rho _{A_{l}}\left( p,q\right)
=\rho .$ Hence we have $\rho _{f}\left( p,q\right) \geq \rho _{A_{l}}\left(
p,q\right) $ and the theorem is proved.
\end{proof}

\begin{theorem}
\label{T2.3} Let $A_{0}(z),A_{1}(z),....,A_{k}(z)$ be entire functions and
there exists an integer $l$ $\left( 0\leq l\leq k\right) $ such that 
\begin{equation*}
\max \left\{ \rho _{A_{j}}\left( p,q\right) :j=0,1,...,k,\text{ }j\neq
l\right\} \leq \rho _{A_{l}}\left( p,q\right) ,
\end{equation*}%
\begin{equation*}
\max \left\{ \tau _{A_{j}}(p,q):\rho _{A_{j}}\left( p,q\right) =\rho
_{A_{l}}\left( p,q\right) \right\} <\tau _{A_{l}}(p,q),
\end{equation*}%
where $0<\rho _{A_{l}}\left( p,q\right) ,$ $\tau _{A_{l}}(p,q)<\infty $ and $%
p\geq q\geq 1$ are integers. Then every meromorphic solution $\left( f\neq
0\right) $ of $\left( \ref{1h}\right) $ satisfies $\rho _{f}\left(
p,q\right) \geq \rho _{A_{l}}\left( p,q\right) .$
\end{theorem}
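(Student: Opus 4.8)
The plan is to argue by contradiction, as in Theorems \ref{T2.1} and \ref{T2.2}, but now I must separate the coefficients of maximal $(p,q)$-order from $A_l$ by means of their $(p,q)$-\emph{type} rather than their order. For $p=q=1$ the conclusion follows from \cite{1}, so I would treat the case $p\geq q\geq 2$. Write $\rho=\rho_{A_l}(p,q)$ and $\tau=\tau_{A_l}(p,q)$, and suppose, towards a contradiction, that some meromorphic solution $f\ (\neq 0)$ of $\left( \ref{1h}\right)$ has $\rho_f(p,q)<\rho$. Put $\tau_2=\max\{\tau_{A_j}(p,q):\rho_{A_j}(p,q)=\rho,\ j\neq l\}$; by hypothesis $\tau_2<\tau$, so I fix reals $\alpha,\beta$ with $\tau_2<\alpha<\beta<\tau$.

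First I would divide $\left( \ref{1h}\right)$ by $f(z+c_l)$ and pass to proximity functions exactly as in the proof of Theorem \ref{T2.2}; since the $A_j$ are entire, $m(r,A_l)=T(r,A_l)$ and hence
\begin{equation*}
T(r,A_l)\leq \sum_{j=0,j\neq l}^{k}T(r,A_j)+\sum_{j=1,j\neq l}^{k}m\left(r,\frac{f(z+c_j)}{f(z+c_l)}\right)+m\left(r,\frac{f(z)}{f(z+c_l)}\right)+O(1).
\end{equation*}
The coefficient terms I would bound in two groups. For $j\neq l$ with $\rho_{A_j}(p,q)=\rho$, the definition of type and $\tau_{A_j}(p,q)\leq\tau_2<\alpha$ give $T(r,A_j)\leq\exp_{p-1}\{\alpha(\log_{q-1}r)^{\rho}\}$ for all large $r$; for $j\neq l$ with $\rho_{A_j}(p,q)<\rho$, setting $\rho_1=\max\{\rho_{A_j}(p,q):\rho_{A_j}(p,q)<\rho\}<\rho$ and choosing $\varepsilon$ small, the definition of order gives $T(r,A_j)\leq\exp_{p-1}\{(\log_{q-1}r)^{\rho_1+\varepsilon}\}$, which is again $\leq\exp_{p-1}\{\alpha(\log_{q-1}r)^{\rho}\}$ for large $r$ since $\rho_1+\varepsilon<\rho$. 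Thus every coefficient with $j\neq l$ obeys $T(r,A_j)\leq\exp_{p-1}\{\alpha(\log_{q-1}r)^{\rho}\}$ for large $r$.

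For the ratio terms I would invoke Lemma \ref{L1.9}$\left( ii\right)$ with an $\varepsilon$ so small that $\rho_f(p,q)+\varepsilon<\rho$: each $m\left(r,f(z+c_j)/f(z+c_l)\right)$ is then $O\!\left(\exp_{p-1}\{(\log_{q-1}r)^{\rho_f(p,q)+\varepsilon}\}\right)$ for all large $r$, hence also bounded by $\exp_{p-1}\{\alpha(\log_{q-1}r)^{\rho}\}$. Collecting the finitely many terms, there is a constant $M>0$, depending only on $k$, with
\begin{equation*}
T(r,A_l)\leq M\exp_{p-1}\{\alpha(\log_{q-1}r)^{\rho}\}+O(1)\qquad\text{for all large } r .
\end{equation*}
On the other hand, since $A_l$ has nonzero finite order $\rho$ and nonzero finite type $\tau$, Lemma \ref{L1.7} applied with $b=\beta<\tau$ yields a set $S_1\subset[1,+\infty)$ of \emph{infinite} logarithmic measure on which $T(r,A_l)>\exp_{p-1}\{\beta(\log_{q-1}r)^{\rho}\}$.

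Combining the two estimates along $S_1$ gives $\exp_{p-1}\{\beta(\log_{q-1}r)^{\rho}\}<M\exp_{p-1}\{\alpha(\log_{q-1}r)^{\rho}\}+O(1)$; dividing through and letting $r\to\infty$ along $S_1$ produces $1\leq o(1)$, the desired contradiction, whence $\rho_f(p,q)\geq\rho=\rho_{A_l}(p,q)$. The step I expect to be the crux is precisely this last comparison: since $p\geq 2$ the outermost iterate $\exp_{p-1}$ contains a genuine exponential, so the strict gap $\alpha<\beta$ forces the ratio $M\exp_{p-1}\{\alpha(\log_{q-1}r)^{\rho}\}/\exp_{p-1}\{\beta(\log_{q-1}r)^{\rho}\}\to 0$ and thereby absorbs the constant $M$ coming from the $k$ summands. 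This is exactly where the type hypothesis $\max\{\tau_{A_j}(p,q)\}<\tau_{A_l}(p,q)$ is used, and it is what distinguishes the present theorem from Theorems \ref{T2.1} and \ref{T2.2}, in which it was the order, rather than the type, that had to be dominant.
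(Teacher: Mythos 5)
Your proof is essentially sound for $p\geq q\geq 2$, but it takes a genuinely different route from the paper's and covers slightly less of the stated range of $(p,q)$. The paper does not argue by contradiction: from the same proximity-function inequality it bounds each ratio $m\left(r,f(z+c_{j})/f(z+c_{l})\right)$ by Lemma \ref{L1.6}, i.e.\ by $o\left(\left(T\left(r+2\left\vert c_{l}\right\vert ,f\right)\right)^{1+\varepsilon }/r^{\delta }\right)$ outside a set of finite logarithmic measure, with no hypothesis on $\rho _{f}\left(p,q\right)$ whatsoever; combining this with the same two-tier bounds on the coefficients (Lemma \ref{L1.7} from below for $A_{l}$ with $b_{2}$, order or type from above with $b_{1}<b_{2}$) yields, as in $\left(\ref{2.10}\right)$, a lower bound for $T\left(r_{n}+2\left\vert c_{l}\right\vert ,f\right)$ along a sequence of infinite logarithmic measure, from which $\rho _{f}\left(p,q\right)\geq \rho _{A_{l}}\left(p,q\right)$ is read off directly. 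You instead assume $\rho _{f}\left(p,q\right)<\rho $ and use Lemma \ref{L1.9}(ii) to make the ratio terms small; your identification of the crux --- that for $p\geq 2$ the iterate $\exp _{p-1}$ absorbs the constant $M$ across the type gap $\alpha <\beta $, which is precisely what fails at $p=1$ and forces the condition $kb<a$ in Theorem \ref{T2.2}(i) --- is correct. What the paper's route buys is uniformity: Lemma \ref{L1.6} is blind to $(p,q)$, so one argument covers all $p\geq q\geq 1$. Your dichotomy (``$p=q=1$ by \cite{1}'', ``$p\geq q\geq 2$ by Lemma \ref{L1.9}(ii)'') leaves the cases $q=1<p$, e.g.\ $(p,q)=(2,1)$, unproved, since Lemma \ref{L1.9} is stated only for those two regimes; for that residual case you would need either to extend Lemma \ref{L1.9} to $q=1<p$ or to fall back on the Lemma \ref{L1.6} estimate as the paper does.
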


\begin{proof}
Suppose $f$ $\left( \neq 0\right) $ be a meromorphic solution of the
equation $\left( \ref{1h}\right) $.

Now from $\left( \ref{2.7A}\right) $ by using Lemma $\ref{L1.6}$, for all $r$
outside of a possible exceptional set $S_{1}$ with finite logarithmic
measure we obtain (see \cite{1})%
\begin{eqnarray}
m(r,A_{l}) &=&T(r,A_{l})\leq \dsum\limits_{j=0,j\neq
l}^{k}m(r,A_{j})+\dsum\limits_{j=1,j\neq l}^{k}m\left( r,\frac{f(z+c_{j})}{%
f(z+c_{l})}\right) +m\left( r,\frac{f(z)}{f(z+c_{l})}\right) +O(1)  \notag \\
&\leq &\dsum\limits_{j=0,j\neq l}^{k}T(r,A_{j})+\dsum\limits_{j=1,j\neq
l}^{k}o\left( \frac{\left( T\left( r+\left\vert c_{j}-c_{l}\right\vert
+\left\vert c_{l}\right\vert ,f\right) \right) ^{1+\varepsilon }}{r^{\delta }%
}\right) +  \notag \\
&&o\left( \frac{\left( T\left( r+2\left\vert c_{l}\right\vert ,f\right)
\right) ^{1+\varepsilon }}{r^{\delta }}\right) +O(1)  \notag \\
&\leq &\dsum\limits_{j=0,j\neq l}^{k}T(r,A_{j})+o\left( \frac{\left( T\left(
r+2\left\vert c_{l}\right\vert ,f\right) \right) ^{1+\varepsilon }}{%
r^{\delta }}\right) .  \label{2.10}
\end{eqnarray}

Consider two real numbers $b_{1},b_{2}$ such that 
\begin{equation*}
\max \left\{ \tau _{A_{j}}(p,q):\rho _{A_{j}}\left( p,q\right) =\rho
_{A_{l}}\left( p,q\right) \right\} <b_{1}<b_{2}<\tau _{A_{l}}(p,q).
\end{equation*}

Now by Lemma $\ref{L1.7}$ there exists a subset $S_{2}\subset \left[
1,+\infty \right) $ of infinite logarithmic measure such that for all $r\in
S_{2},$ we have 
\begin{equation*}
\log _{p-1}T(r,A_{l})>b_{2}\left( \log _{q-1}r\right) ^{\rho _{A_{l}}\left(
p,q\right) }.
\end{equation*}

Therefore for a sequence $\left\{ r_{n}\right\} $ such that $r_{n}\in
S_{2},r_{n}\rightarrow \infty $ we have%
\begin{equation}
\log _{p-1}T(r_{n},A_{l})>b_{2}\left( \log _{q-1}r_{n}\right) ^{\rho
_{A_{l}}\left( p,q\right) }.  \label{2.11}
\end{equation}

Now if we take $b=\max \left\{ \rho _{A_{j}}\left( p,q\right) :j=0,1,...,k,%
\text{ }j\neq l\right\} <\rho _{A_{l}}\left( p,q\right) ,$ then for any
given $\varepsilon $ $\left( 0<\varepsilon <\rho _{A_{l}}\left( p,q\right)
-b\right) $ and sufficiently large $r_{n}$ , we have%
\begin{equation}
T(r_{n},A_{j})\leq \exp _{p-1}\left\{ \left( \log _{q-1}r_{n}\right)
^{b+\varepsilon }\right\} \leq \exp _{p-1}\left\{ b_{1}\left( \log
_{q-1}r_{n}\right) ^{\rho _{A_{l}}\left( p,q\right) }\right\} .  \label{2.12}
\end{equation}

Again since%
\begin{equation*}
\max \left\{ \tau _{A_{j}}(p,q):\rho _{A_{j}}\left( p,q\right) =\rho
_{A_{l}}\left( p,q\right) \right\} <\tau _{A_{l}}(p,q),
\end{equation*}

Then for sufficiently large $r_{n}$ , we have%
\begin{equation}
T(r_{n},A_{j})\leq \exp _{p-1}\left\{ b_{1}\left( \log _{q-1}r_{n}\right)
^{\rho _{A_{l}}\left( p,q\right) }\right\} .  \label{2.13}
\end{equation}

Now for $r_{n}\in S_{2}\backslash S_{1},$ substituting $\left( \ref{2.11}%
\right) $ and $\left( \ref{2.12}\right) $ or $\left( \ref{2.13}\right) $
into $\left( \ref{2.10}\right) $ we obtain%
\begin{gather*}
\exp _{p-1}\left\{ b_{2}\left( \log _{q-1}r_{n}\right) ^{\rho _{A_{l}}\left(
p,q\right) }\right\} <T(r_{n},A_{l}) \\
\leq k\exp _{p-1}\left\{ b_{1}\left( \log _{q-1}r_{n}\right) ^{\rho
_{A_{l}}\left( p,q\right) }\right\} +o\left( \frac{\left( T\left(
r_{n}+2\left\vert c_{l}\right\vert ,f\right) \right) ^{1+\varepsilon }}{%
r_{n}^{\delta }}\right) .
\end{gather*}

The above implies%
\begin{equation*}
\left( 1-o(1)\right) \exp _{p-1}\left\{ b_{2}\left( \log _{q-1}r_{n}\right)
^{\rho _{A_{l}}\left( p,q\right) }\right\} <o\left( \frac{\left( T\left(
r_{n}+2\left\vert c_{l}\right\vert ,f\right) \right) ^{1+\varepsilon }}{%
r_{n}^{\delta }}\right) .
\end{equation*}

Hence the result follows.
\end{proof}

\bigskip Next we consider the properties of meromorphic solutions of $\left( %
\ref{1nh}\right) ~$where $A_{0}(z),A_{1}(z),....,A_{k}(z),F$ are entire
functions.

\begin{theorem}
\label{T2.4} Let $A_{0}(z),A_{1}(z),....,A_{k}(z)$ be entire functions that
satisfy the conditions stated in the Theorem $\ref{T2.3}$ and let $F$ be an
entire function. Then the followings hold

$\left( i\right) $ If $\rho _{F}\left( p,q\right) <\rho _{A_{l}}\left(
p,q\right) $ or $\rho _{F}\left( p,q\right) =\rho _{A_{l}}\left( p,q\right)
,\tau _{F}\left( p,q\right) <\tau _{A_{l}}\left( p,q\right) ,$ then every
meromorphic solution $\left( f\neq 0\right) $ of \ $\left( \ref{1nh}\right) $
satisfies $\rho _{f}\left( p,q\right) \geq \rho _{A_{l}}\left( p,q\right) .$

$\left( ii\right) $ If $\rho _{F}\left( p,q\right) >\rho _{A_{l}}\left(
p,q\right) ,$ then every meromorphic solution $\left( f\neq 0\right) $ of \ $%
\left( \ref{1nh}\right) $ satisfies $\rho _{f}\left( p,q\right) \geq \rho
_{F}\left( p,q\right) .$
\end{theorem}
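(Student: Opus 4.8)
The plan is to split into the two cases exactly as the statement suggests, according to whether $F$ is dominated by $A_{l}$ (part (i)) or dominates it (part (ii)), and in each case to reduce the estimate to a comparison between the growth of the dominant function and that of $f$, mirroring the proof of Theorem~\ref{T2.3}. For part (i) I would isolate $A_{l}$ by dividing $(\ref{1nh})$ by $f(z+c_{l})$, which now produces the extra term $F(z)/f(z+c_{l})$ on the right. Taking proximity functions and using $m(r,A_{l})=T(r,A_{l})$ (as $A_{l}$ is entire) yields, in analogy with $(\ref{2.7A})$,
\[
T(r,A_{l})\le \sum_{j=0,\,j\neq l}^{k}T(r,A_{j})+\sum_{j=1,\,j\neq l}^{k}m\left(r,\frac{f(z+c_{j})}{f(z+c_{l})}\right)+m\left(r,\frac{f(z)}{f(z+c_{l})}\right)+m\left(r,\frac{F}{f(z+c_{l})}\right)+O(1).
\]
The difference-quotient terms are controlled by Lemma~\ref{L1.6} just as in $(\ref{2.10})$, giving $O\big((T(r+\cdots,f))^{1+\varepsilon}/r^{\delta}\big)$ outside a set $S_{1}$ of finite logarithmic measure. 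The genuinely new ingredient is the bound on the last term, for which I would use the first fundamental theorem together with Lemma~\ref{L1.5A}:
\[
m\left(r,\frac{F}{f(z+c_{l})}\right)\le T(r,F)+T(r,f(z+c_{l}))+O(1)\le T(r,F)+(1+o(1))\,T(r+|c_{l}|,f)+O(1).
\]

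Here $T(r,F)$ must be absorbed into the dominant growth. I would fix real numbers $b_{1},b_{2}$ with $\max\{\tau_{A_{j}}(p,q):\rho_{A_{j}}(p,q)=\rho_{A_{l}}(p,q)\}<b_{1}<b_{2}<\tau_{A_{l}}(p,q)$, additionally requiring $\tau_{F}(p,q)<b_{1}$ in the subcase $\rho_{F}(p,q)=\rho_{A_{l}}(p,q)$. In either hypothesis of (i) this gives $T(r,F)\le \exp_{p-1}\{b_{1}(\log_{q-1}r)^{\rho_{A_{l}}(p,q)}\}$ for large $r$ (in the subcase $\rho_{F}(p,q)<\rho_{A_{l}}(p,q)$ it is even of strictly smaller order). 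Applying Lemma~\ref{L1.7} along a sequence $r_{n}\in S_{2}$ of infinite logarithmic measure to bound $T(r_{n},A_{l})$ from below as in $(\ref{2.11})$, and bounding the remaining $T(r_{n},A_{j})$ from above as in $(\ref{2.12})$--$(\ref{2.13})$, I expect to reach, for $r_{n}\in S_{2}\setminus S_{1}$,
\[
(1-o(1))\exp_{p-1}\left\{b_{2}\left(\log_{q-1}r_{n}\right)^{\rho_{A_{l}}(p,q)}\right\}\le C\,T(r_{n}+|c_{l}|,f)
\]
for some constant $C>0$, since the terms carrying $b_{1}$ are $o$ of the term carrying $b_{2}$. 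Passing to $\log_{p-1}$, then $\log_{p}$, and dividing by $\log_{q}r_{n}$ (using $\log_{q}(r_{n}+|c_{l}|)\sim\log_{q}r_{n}$) then forces $\rho_{f}(p,q)\ge\rho_{A_{l}}(p,q)$.

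For part (ii), since $F$ is now the dominant function, I would not divide but estimate $(\ref{1nh})$ directly. Writing $F=\sum_{j=0}^{k}A_{j}(z)f(z+c_{j})$ and taking proximity functions gives
\[
T(r,F)=m(r,F)\le\sum_{j=0}^{k}T(r,A_{j})+\sum_{j=0}^{k}T(r,f(z+c_{j}))+O(1).
\]
Because $\rho_{A_{j}}(p,q)\le\rho_{A_{l}}(p,q)<\rho_{F}(p,q)$, I would fix $\varepsilon>0$ with $\rho_{A_{l}}(p,q)+\varepsilon<\rho_{F}(p,q)$, so that $\sum_{j}T(r,A_{j})\le(k+1)\exp_{p-1}\{(\log_{q-1}r)^{\rho_{A_{l}}(p,q)+\varepsilon}\}$, and bound $T(r,f(z+c_{j}))\le(1+o(1))T(r+|c_{j}|,f)$ by Lemma~\ref{L1.5A}. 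Choosing a sequence $r_{n}$ realizing $\rho_{F}(p,q)$, the coefficient sum becomes $o(T(r_{n},F))$, leaving
\[
(1-o(1))\,T(r_{n},F)\le(1+o(1))(k+1)\,T(r_{n}+c,f),\qquad c=\max_{0\le j\le k}|c_{j}|,
\]
and taking logarithms and limits along $r_{n}$ as above gives $\rho_{f}(p,q)\ge\rho_{F}(p,q)$.

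The proximity-function manipulations and the iterated-logarithm bookkeeping are routine and parallel Theorem~\ref{T2.3}. The one point that requires care is the handling of $m(r,F/f(z+c_{l}))$ in part (i): unlike the homogeneous case, the division introduces a full $T(r,F)$ plus a full $T(r+|c_{l}|,f)$ (the latter without the $r^{-\delta}$ saving of Lemma~\ref{L1.6}), and one must verify that the $F$-part never exceeds the dominant growth $\exp_{p-1}\{b_{2}(\log_{q-1}r)^{\rho_{A_{l}}(p,q)}\}$. This is precisely where the hypotheses of (i) enter, and the delicate step is the unified choice of $b_{1}$ that simultaneously dominates $\max\{\tau_{A_{j}}(p,q):\rho_{A_{j}}(p,q)=\rho_{A_{l}}(p,q)\}$ and $\tau_{F}(p,q)$ while remaining below $\tau_{A_{l}}(p,q)$; the residual $T(r+|c_{l}|,f)$ term is harmless because the conclusion we seek is only a lower bound on $\rho_{f}(p,q)$.
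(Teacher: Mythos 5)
Your proposal follows essentially the same route as the paper's proof: in case (i) divide $(\ref{1nh})$ by $f(z+c_{l})$, bound $m\left(r,F/f(z+c_{l})\right)$ by $T(r,F)+T(r+|c_{l}|,f)+O(1)$, dominate $T(r,F)$ via the type hypothesis, and compare against the lower bound for $T(r_{n},A_{l})$ from Lemma~\ref{L1.7} along a sequence of infinite logarithmic measure; in case (ii) estimate $T(r,F)$ directly from the equation and use a sequence realizing $\rho_{F}(p,q)$. The only cosmetic discrepancy is that the difference-quotient terms contribute $\left(T(\cdot,f)\right)^{1+\varepsilon}$ rather than $T(\cdot,f)$ on the right-hand side (the paper writes $3\left(T(2r_{n},f)\right)^{2}$), which does not affect the conclusion.
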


\begin{proof}
We first consider Case $(i),$ when $\rho _{F}\left( p,q\right) <\rho
_{A_{l}}\left( p,q\right) $ or $\rho _{F}\left( p,q\right) =\rho
_{A_{l}}\left( p,q\right) ,\tau _{F}\left( p,q\right) <\tau _{A_{l}}\left(
p,q\right) .$

First let $f$ $\left( \neq 0\right) $ be a meromorphic solution of $\left( %
\ref{1nh}\right) $ and divide $\left( \ref{1nh}\right) $ by $f(z+c_{l})$ we
get%
\begin{equation*}
-A_{l}(z)=A_{k}(z)\frac{f(z+c_{k})}{f(z+c_{l})}+...+A_{l-1}(z)\frac{%
f(z+c_{l-1})}{f(z+c_{l})}+...+A_{1}(z)\frac{f(z+c_{1})}{f(z+c_{l})}+A_{0}(z)%
\frac{f(z)}{f(z+c_{l})}-\frac{F(z)}{f(z+c_{l})}.
\end{equation*}

The above expression can be written as%
\begin{equation}
-A_{l}(z)=\dsum\limits_{j=1,j\neq l}^{k}A_{j}(z)\frac{f(z+c_{j})}{f(z+c_{l})}%
+A_{0}(z)\frac{f(z)}{f(z+c_{l})}-\frac{F(z)}{f(z+c_{l})}.  \label{2.14}
\end{equation}

Now for any given $\varepsilon >0$ and sufficiently large $r,$ using Lemma $%
\left( \ref{L1.5A}\right) $ and Lemma $\left( \ref{L1.6}\right) $ in $\left( %
\ref{2.14}\right) $ we obtain (see \cite{1}) 
\begin{gather}
T(r,A_{l})=m(r,A_{l}(z))\leq m\left( r,\frac{F(z)}{f(z+c_{l})}\right)
+\dsum\limits_{j=0,j\neq l}^{k}m(r,A_{j}(z))  \notag \\
+\dsum\limits_{j=1,j\neq l}^{k}m\left( r,\frac{f(z+c_{j})}{f(z+c_{l})}%
\right) +m\left( r,\frac{f(z)}{f(z+c_{l})}\right) +O(1)  \notag \\
\leq T(r,F)+\dsum\limits_{j=0,j\neq l}^{k}T(r,A_{j}(z))+2T\left(
r+\left\vert c_{l}\right\vert ,f(z)\right) +o\left( \frac{\left( T\left(
r+2\left\vert c_{l}\right\vert ,f\right) \right) ^{1+\varepsilon }}{%
r^{\delta }}\right) ,  \label{2.15}
\end{gather}%
for $r\rightarrow \infty ,$ $r\notin S_{1},$ where $S_{1}$ is a set of
finite logarithmic measure.

Consider two real numbers $b_{1},b_{2}$ such that 
\begin{equation*}
\max \left\{ \tau _{A_{j}}(p,q):\rho _{A_{j}}\left( p,q\right) =\rho
_{A_{l}}\left( p,q\right) \right\} <b_{1}<b_{2}<\tau _{A_{l}}(p,q).
\end{equation*}

Now by Lemma $\left( \ref{L1.7}\right) $ there exists a subset $S_{2}\subset %
\left[ 1,+\infty \right) $ of infinite logarithmic measure such that for all 
$r\in S_{2},$we have 
\begin{equation*}
\log _{p-1}T\left( r,F\right) \leq b_{1}\left( \log _{q-1}r\right) ^{\rho
_{A_{l}}\left( p,q\right) }.
\end{equation*}

Therefore for a sequence $\left\{ r_{n}\right\} $ such that $r_{n}\in
S_{2},r_{n}\rightarrow \infty $ we have%
\begin{equation}
\log _{p-1}T(r_{n},F)\leq b_{1}\left( \log _{q-1}r_{n}\right) ^{\rho
_{A_{l}}\left( p,q\right) }.  \label{2.16}
\end{equation}

Now for $r_{n}\in S_{2}\backslash S_{1},$ substituting $\left( \ref{2.11}%
\right) $, $\left( \ref{2.12}\right) $ or $\left( \ref{2.13}\right) $ and $%
\left( \ref{2.16}\right) $ into $\left( \ref{2.15}\right) $ we obtain%
\begin{equation}
\exp _{p-1}\left\{ b_{2}\left( \log _{q-1}r_{n}\right) ^{\rho _{A_{l}}\left(
p,q\right) }\right\} <T(r_{n},A_{l})\leq (k+1)\exp _{p-1}\left\{ b_{1}\left(
\log _{q-1}r_{n}\right) ^{\rho _{A_{l}}\left( p,q\right) }\right\} +3\left(
T\left( 2r_{n},f\right) \right) ^{2}.  \label{2.17}
\end{equation}

By $\left( \ref{2.17}\right) $ first part of the theorem is proved.

Case $\left( ii\right) :$ Consider $\rho _{F}\left( p,q\right) >\rho
_{A_{l}}\left( p,q\right) $ and let let $f$ $\left( \neq 0\right) $ be a
meromorphic solution of $\left( \ref{1nh}\right) $.

Now for any given $\varepsilon >0$ and sufficiently large $r,$ using Lemma $%
\left( \ref{L1.5A}\right) $ and Lemma $\left( \ref{L1.6}\right) $ in $\left( %
\ref{2.14}\right) $ we obtain (see \cite{1})%
\begin{eqnarray}
T(r,F) &\leq
&\dsum\limits_{j=0}^{k}T(r,A_{j}(z))+\dsum\limits_{j=1}^{k}T\left(
r,f(z+c_{j})\right) +T(r,f\left( z\right) )+O(1)  \notag \\
&\leq &\dsum\limits_{j=0}^{k}T(r,A_{j}(z))+(2k+1)T(2r,f\left( z\right)
)+O(1).  \label{2.18}
\end{eqnarray}

Now by definition of the $(p,q)$-order there exists a sequence $\left\{
r_{n}\right\} $ such that $r_{n}\rightarrow \infty $ and for any given $%
\varepsilon $ $\left( 0<2\varepsilon <\rho _{F}\left( p,q\right) -\rho
_{A_{l}}\left( p,q\right) \right) ,$ we have%
\begin{equation}
T(r_{n},F)\geq \exp _{p-1}\left\{ \left( \log _{q-1}r_{n}\right) ^{\rho
_{F}\left( p,q\right) -\varepsilon }\right\}  \label{2.19}
\end{equation}%
and for $j=0,1,...,k$ 
\begin{equation}
T(r_{n},A_{j}(z))\leq \exp _{p-1}\left\{ \left( \log _{q-1}r_{n}\right)
^{b+\varepsilon }\right\} \leq \exp _{p-1}\left\{ \left( \log
_{q-1}r_{n}\right) ^{\rho _{A_{l}}\left( p,q\right) +\varepsilon }\right\} ,
\label{2.20}
\end{equation}%
where $b=\max \left\{ \rho _{A_{j}}\left( p,q\right) :j=0,1,...,k,\text{ }%
j\neq l\right\} <\rho _{A_{l}}\left( p,q\right) .$

Substituting $\left( \ref{2.19}\right) $ and $\left( \ref{2.20}\right) $
into $\left( \ref{2.18}\right) $ we obtain%
\begin{eqnarray}
\exp _{p-1}\left\{ \left( \log _{q-1}r_{n}\right) ^{\rho _{F}\left(
p,q\right) -\varepsilon }\right\} &\leq &(k+1)\exp _{p-1}\left\{ \left( \log
_{q-1}r_{n}\right) ^{\rho _{A_{l}}\left( p,q\right) +\varepsilon }\right\}
\label{2.21} \\
&&+(2k+1)T(2r,f\left( z\right) ).  \notag
\end{eqnarray}

By $\left( \ref{2.21}\right) $ second part of the theorem is proved.
\end{proof}

Next two theorems, i.e.\ Theorem \ref{C} and Theorem \ref{D} are based on
linear difference equation with meromorphic coefficients. In Theorem \ref{C}
we take homogeneous linear difference equation with one coefficient having
maximal $(p,q)-$order.

\begin{theorem}
\label{C}Let $A_{j}(z)(j=0,1,\ldots ,k)$ be meromorphic functions. \bigskip
If there exits an $A_{m}(z)(0\leq m\leq k)$ such that 
\begin{eqnarray*}
\lambda _{\frac{1}{A_{m}}}\left( p,q\right) &<&\rho _{A_{m}}\left(
p,q\right) <\infty , \\
\text{and }\max \{\rho _{A_{j}}\left( p,q\right) &:&j=0,1,\ldots ,k,~j\neq
m\}<\rho _{A_{m}}\left( p,q\right)
\end{eqnarray*}%
then for every nonzero meromorphic solution $f(z)$ of (\ref{1h}) satisfies $%
\rho _{f}\left( p,q\right) \geq \rho _{A_{j}}\left( p,q\right) .$
\end{theorem}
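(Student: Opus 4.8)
The plan is to transplant the proximity-function argument of Theorem \ref{T2.3} to the meromorphic setting, the one genuinely new ingredient being the pole hypothesis $\lambda_{1/A_m}(p,q)<\rho_{A_m}(p,q)$, which I will use to show that despite $A_m$ having poles, its proximity function $m(r,A_m)$ still grows at the full $(p,q)$-order $\rho_{A_m}(p,q)$.

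First I would take a nonzero meromorphic solution $f$ of (\ref{1h}) and divide by $f(z+c_m)$, obtaining $-A_m(z)=\sum_{j=1,\,j\neq m}^{k}A_j(z)\frac{f(z+c_j)}{f(z+c_m)}+A_0(z)\frac{f(z)}{f(z+c_m)}$. Applying the proximity function together with subadditivity, then bounding $m(r,A_j)\le T(r,A_j)$ for the (now meromorphic) coefficients and estimating each quotient term by Lemma \ref{L1.6}, I arrive at an inequality of the same shape as (\ref{2.10}), namely $m(r,A_m)\le\sum_{j=0,\,j\neq m}^{k}T(r,A_j)+o\!\left(\frac{\left(T(r+2|c_m|,f)\right)^{1+\varepsilon}}{r^{\delta}}\right)$, valid for all $r$ outside a set $S_1$ of finite logarithmic measure. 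The crucial difference from Theorem \ref{T2.3} is that here the left-hand side is $m(r,A_m)$, not $T(r,A_m)$, since $A_m$ is no longer entire.

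The main obstacle, and the heart of the proof, is to recover a lower bound for $m(r,A_m)$ of order $\rho_{A_m}(p,q)$. Writing $m(r,A_m)=T(r,A_m)-N(r,A_m)$, I would fix $\varepsilon>0$ small enough that $\lambda_{1/A_m}(p,q)+\varepsilon<\rho_{A_m}(p,q)-\varepsilon$; then for large $r$ the pole hypothesis gives $N(r,A_m)\le\exp_{p-1}\{(\log_{q-1}r)^{\lambda_{1/A_m}(p,q)+\varepsilon}\}$, while the definition of $(p,q)$-order supplies a sequence $r_n\to\infty$ with $T(r_n,A_m)\ge\exp_{p-1}\{(\log_{q-1}r_n)^{\rho_{A_m}(p,q)-\varepsilon}\}$. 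Since the exponent of the $T$-bound exceeds that of the $N$-bound, the characteristic dominates its own pole-counting part, and I obtain $m(r_n,A_m)\ge(1-o(1))\exp_{p-1}\{(\log_{q-1}r_n)^{\rho_{A_m}(p,q)-\varepsilon}\}$ along this sequence. Without the hypothesis $\lambda_{1/A_m}(p,q)<\rho_{A_m}(p,q)$ this separation fails, and the dominant coefficient could be ``hidden'' in its poles; this is precisely the step that distinguishes the meromorphic case from the entire one.

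Finally, setting $b=\max\{\rho_{A_j}(p,q):j\neq m\}<\rho_{A_m}(p,q)$ and shrinking $\varepsilon$ further so that $\varepsilon<\rho_{A_m}(p,q)-b$, each remaining coefficient satisfies $T(r_n,A_j)\le\exp_{p-1}\{(\log_{q-1}r_n)^{b+\varepsilon}\}$. I would select the $r_n$ to lie outside $S_1$, which is possible because $T(\cdot,A_m)$ is nondecreasing and $S_1$ has finite logarithmic measure, so each block $[r_n,2r_n]$ meets the complement of $S_1$. Substituting the lower bound for $m(r_n,A_m)$ and these upper bounds into the proximity inequality, the coefficient sum $\sum_{j\neq m}T(r_n,A_j)$ is of strictly smaller $(p,q)$-order than the left-hand side and is therefore absorbed, leaving $(1-o(1))\exp_{p-1}\{(\log_{q-1}r_n)^{\rho_{A_m}(p,q)-\varepsilon}\}\le o\!\left(\frac{\left(T(r_n+2|c_m|,f)\right)^{1+\varepsilon}}{r_n^{\delta}}\right)$. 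Applying $\log_{p-1}$ to both sides and comparing orders forces $\rho_f(p,q)\ge\rho_{A_m}(p,q)-\varepsilon$, and since $\varepsilon>0$ is arbitrary this yields $\rho_f(p,q)\ge\rho_{A_m}(p,q)$, which is the assertion of the theorem ($A_m$ having the largest $(p,q)$-order among the coefficients).
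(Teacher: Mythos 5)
Your proposal is correct and follows essentially the same route as the paper's proof: divide (\ref{1h}) by $f(z+c_{m})$, estimate the quotient terms with Lemma \ref{L1.6}, bound the other coefficients by their smaller $(p,q)$-order, and use the hypothesis $\lambda _{\frac{1}{A_{m}}}\left( p,q\right) <\rho _{A_{m}}\left( p,q\right) $ to control $N(r,A_{m})$ so that the full growth of $T(r,A_{m})$ survives. The only (cosmetic) difference is that you subtract $N(r,A_{m})$ to obtain a lower bound for $m(r,A_{m})$ directly, whereas the paper writes $T(r,A_{m})=m(r,A_{m})+N(r,A_{m})$ and carries the term $N(r,A_{m})$ to the right-hand side of the inequality before invoking the same three growth estimates.
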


\begin{proof}
Let $f$ be a meromorphic solution of (\ref{1h}) and put $c_{0}=0$. We divide
(\ref{1h}) by $f(z+c_{m})~$and we get%
\begin{equation}
-A_{m}(z)=\sum_{j=0,j\neq m}^{k}\frac{A_{j}(z)f(z+c_{j})}{f(z+c_{m})}.
\label{c1}
\end{equation}

Now from Lemma \ref{L1.6} for any $\varepsilon >0$ we get,%
\begin{equation*}
m\left( r,\frac{f(z+c_{j})}{f(z+c_{m})}\right) \leq o\left( \frac{\left\{
T(r+3C,f)\right\} ^{1+\varepsilon }}{r^{\delta }}\right) ~,j=0,1,\ldots
,k,~j\neq m,
\end{equation*}%
where $C=\max_{0\leq \leq k}\{\left\vert c_{j}\right\vert :j=0,1,\ldots
,k\},~r\notin S_{1}$ where $S_{1}$ is chosen as Lemma \ref{L1.6}.

Using the above result, from (\ref{c1}) we get$,$ 
\begin{eqnarray}
T(r,A_{m}) &=&m(r,A_{m})+N(r,A_{m})  \notag \\
&\leq &\sum_{j=0,j\neq m}^{k}m(r,A_{j})+\sum_{j=0,j\neq m}^{k}m\left( r,%
\frac{f(z+c_{j})}{f(z+c_{m})}\right) +N(r,A_{m})+O(1)  \notag \\
&\leq &\sum_{j=0,j\neq m}^{k}T(r,A_{j})+o\left( \frac{\left\{
T(r+3C,f)\right\} ^{1+\varepsilon }}{r^{\delta }}\right) +N(r,A_{m})+O(1) 
\notag \\
&\leq &\sum_{j=0,j\neq m}^{k}T(r,A_{j})+\left\{ T(2r,f)\right\}
^{2}+N(r,A_{m})+O(1)  \label{c2}
\end{eqnarray}%
for $r\notin S_{1}.$

Now we denote,$~\rho =\rho _{A_{m}}\left( p,q\right) ,~\rho _{1}=\max \{\rho
_{A_{j}}\left( p,q\right) :j=0,1,\ldots ,k;~~j\neq m\}.~$Then clearly $\rho
_{1}<\rho $.

For that $\varepsilon ~$we have%
\begin{equation}
T(r,A_{m})>\exp _{p-1}\left\{ (\log _{q-1}r)^{\rho _{1}-\varepsilon }\right\}
\label{c3}
\end{equation}%
for sufficiently large $r$ with $r\in S_{2},~$where $S_{2}$ be a set with
infinite logarithmic measure.

And for $~j\neq m,~$for that $\varepsilon ~$we have%
\begin{equation}
T(r,A_{j})\leq \exp _{p-1}\left\{ (\log _{q-1}r)^{\rho _{1}+\varepsilon
}\right\}  \label{c4}
\end{equation}%
for sufficiently large $r.$

Again by the definition of $\lambda _{\frac{1}{A_{m}}}\left( p,q\right) ,$
we have for the above $\varepsilon $ and for sufficiently large $r$%
\begin{equation}
N(r,A_{m})\leq \exp _{p-1}\left\{ (\log _{q-1}r)^{\lambda +\varepsilon
}\right\}  \label{c5}
\end{equation}%
taking $\lambda _{\frac{1}{A_{m}}}\left( p,q\right) =\lambda .$

Now, using all the above relations (\ref{c3})-(\ref{c5}) and chosing $%
\varepsilon $ such that $0<\varepsilon <\frac{1}{2}\min \left\{ \rho -\rho
_{1},\rho -\lambda \right\} ,$ we have from (\ref{c2})%
\begin{eqnarray*}
\exp _{p-1}\left\{ (\tau -\varepsilon )(\log _{q-1}r)^{\rho }\right\}
&<&O\left( \exp _{p-1}\left\{ (\log _{q-1}r)^{\rho _{1}+\varepsilon
}\right\} \right) +3\left\{ T(2r,f)\right\} ^{2} \\
&&+\exp _{p-1}\left\{ (\log _{q-1}r)^{\lambda +\varepsilon }\right\} +O(1) \\
&\Rightarrow &3\left\{ T(2r,f)\right\} ^{2}>O\left( \exp _{p-1}\left\{ (\log
_{q-1}r)^{\rho +\varepsilon }\right\} \right)
\end{eqnarray*}%
for sufficiently large $r$ and $r\in S_{2}\backslash S_{1}.$

Which implies, $\rho _{f}\left( p,q\right) \geq \rho .$
\end{proof}

In the next theorem we consider non-homoheneous linear difference equation
which may have more than one coefficient with the maximal $(p,q)-$order. For
those type of equations we need to consider the $(p,q)-$type among the
coefficients having maximal $(p,q)-$order.

\begin{theorem}
\label{D}Let $A_{j}(z)(j=0,1,\ldots ,k)$ and $F(z)$ be meromorphic
functions. If there exits an $A_{m}(z)(0\leq m\leq k)$ such that 
\begin{eqnarray*}
\lambda _{\frac{1}{A_{m}}}\left( p,q\right) &<&\rho _{A_{m}}\left(
p,q\right) <\infty , \\
\max \{\rho _{A_{j}}\left( p,q\right) &:&j=0,1,\ldots ,k,~j\neq m\}\leq \rho
_{A_{m}}\left( p,q\right) \\
\text{and }\max \{\tau _{A_{j}}\left( p,q\right) &:&\rho _{A_{j}}\left(
p,q\right) =\rho _{A_{m}}\left( p,q\right) ,~j=0,1,\ldots ,k,~j\neq m\}<\tau
_{A_{m}}\left( p,q\right) <\infty ,
\end{eqnarray*}%
then the following cases arise:

i)\qquad If $\rho _{F}\left( p,q\right) <\rho _{A_{m}}\left( p,q\right) ,$ or%
$~\rho _{F}\left( p,q\right) =\rho _{A_{m}}\left( p,q\right) $ and $\tau
_{F}\left( p,q\right) \neq \tau _{A_{m}}\left( p,q\right) ,$ then every
nonzero meromorphic solution $f(z)$ of (\ref{1nh}) satisfies $\rho
_{f}\left( p,q\right) \geq \rho _{A_{m}}\left( p,q\right) .$

ii)\qquad If $\rho _{F}\left( p,q\right) >\rho _{A_{m}}\left( p,q\right) $
then every nonzero meromorphic solution $f(z)$ of (\ref{1nh}) satisfies $%
\rho _{f}\left( p,q\right) \geq \rho _{F}\left( p,q\right) .$
\end{theorem}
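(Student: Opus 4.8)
The plan is to combine the isolation-of-the-dominant-coefficient argument of Theorem~\ref{T2.4} with the pole-handling device from the proof of Theorem~\ref{C}. First I would set $c_0=0$, divide (\ref{1nh}) by $f(z+c_m)$, and rewrite the equation as
\[-A_m(z)=\sum_{j=0,\,j\neq m}^{k}A_j(z)\frac{f(z+c_j)}{f(z+c_m)}-\frac{F(z)}{f(z+c_m)}.\]
Applying the proximity function, using $T(r,A_m)=m(r,A_m)+N(r,A_m)$, bounding $m(r,A_j)\leq T(r,A_j)$, estimating each $m\bigl(r,f(z+c_j)/f(z+c_m)\bigr)$ by Lemma~\ref{L1.6} off an exceptional set $S_1$ of finite logarithmic measure, and controlling the forcing term through $m\bigl(r,F/f(z+c_m)\bigr)\leq T(r,F)+T(r,f(z+c_m))+O(1)$ together with Lemma~\ref{L1.5A}, I would reach an inequality of the form
\[T(r,A_m)\leq \sum_{j\neq m}T(r,A_j)+T(r,F)+N(r,A_m)+\{T(2r,f)\}^{2}+O(1),\qquad r\notin S_1,\]
in which all the shifted-$f$ contributions have been absorbed into the single term $\{T(2r,f)\}^{2}$.

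For Case~$(i)$ in the regime where $F$ is dominated, i.e. $\rho_F(p,q)<\rho_{A_m}(p,q)$ or $\rho_F(p,q)=\rho_{A_m}(p,q)$ with $\tau_F(p,q)<\tau_{A_m}(p,q)$, I would fix reals $b_1,b_2$ with $\max\{\tau_{A_j}(p,q):\rho_{A_j}(p,q)=\rho_{A_m}(p,q)\}<b_1<b_2<\tau_{A_m}(p,q)$, also taking $b_1>\tau_F(p,q)$ in the equal-order case, and invoke Lemma~\ref{L1.7} to obtain a set $S_2$ of infinite logarithmic measure on which $\log_{p-1}T(r,A_m)>b_2(\log_{q-1}r)^{\rho_{A_m}(p,q)}$. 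On $S_2$ the left side of the displayed inequality is at least $\exp_{p-1}\{b_2(\log_{q-1}r)^{\rho_{A_m}(p,q)}\}$, while each coefficient of submaximal order is at most $\exp_{p-1}\{(\log_{q-1}r)^{\rho_1+\varepsilon}\}$ with $\rho_1<\rho_{A_m}(p,q)$, each coefficient of maximal order and also $T(r,F)$ are at most $\exp_{p-1}\{b_1(\log_{q-1}r)^{\rho_{A_m}(p,q)}\}$, and the pole term is harmless because $\lambda_{\frac{1}{A_m}}(p,q)=\lambda<\rho_{A_m}(p,q)$ gives $N(r,A_m)\leq\exp_{p-1}\{(\log_{q-1}r)^{\lambda+\varepsilon}\}$. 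Since $b_1<b_2$, everything on the right except $\{T(2r,f)\}^{2}$ is $o$ of the left along a sequence $r_n\in S_2\setminus S_1$, forcing $\rho_f(p,q)\geq\rho_{A_m}(p,q)$.

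The hard part will be the remaining subcase of Case~$(i)$, namely $\rho_F(p,q)=\rho_{A_m}(p,q)$ with $\tau_F(p,q)>\tau_{A_m}(p,q)$, because then $F$ is no longer subordinate to $A_m$ and the divided form is the wrong vehicle. Here I would argue from (\ref{1nh}) directly via $T(r,F)\leq\sum_{j=0}^{k}T(r,A_j)+(2k+1)T(2r,f)+O(1)$ and apply Lemma~\ref{L1.7} to $F$ to secure a sequence on which $\log_{p-1}T(r,F)>b(\log_{q-1}r)^{\rho_F(p,q)}$ for some $b$ with $\tau_{A_m}(p,q)<b<\tau_F(p,q)$. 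Because $\tau_F(p,q)>\tau_{A_m}(p,q)\geq\max\{\tau_{A_j}(p,q):\rho_{A_j}(p,q)=\rho_{A_m}(p,q)\}$, every $T(r,A_j)$ is then strictly dominated by the left side, so $T(2r,f)$ must carry the growth and $\rho_f(p,q)\geq\rho_F(p,q)=\rho_{A_m}(p,q)$.

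Case~$(ii)$ is the easiest and needs no division: with $\rho_F(p,q)>\rho_{A_m}(p,q)\geq\rho_{A_j}(p,q)$ for every $j$, the same bound $T(r,F)\leq\sum_{j=0}^{k}T(r,A_j)+(2k+1)T(2r,f)+O(1)$ shows, along a sequence realising the order of $F$, that $\sum_jT(r,A_j)=o(T(r,F))$, whence $T(2r,f)\geq\exp_{p-1}\{(\log_{q-1}r)^{\rho_F(p,q)-\varepsilon}\}$ and $\rho_f(p,q)\geq\rho_F(p,q)$. Throughout, the recurring technical obstacle is the exceptional-set bookkeeping: one must verify that the infinite-logarithmic-measure set $S_2$ supplied by Lemma~\ref{L1.7} is not absorbed by the finite-logarithmic-measure set $S_1$ coming from Lemma~\ref{L1.6}, so that a genuine sequence $r_n\to\infty$ persists in $S_2\setminus S_1$ on which the final estimates are read off.
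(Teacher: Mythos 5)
Your proposal is correct and follows essentially the same route as the paper: divide (\ref{1nh}) by $f(z+c_{m})$, bound the shifted quotients by Lemma \ref{L1.6} and the forcing term via Lemma \ref{L1.5A}, control $N(r,A_{m})$ through $\lambda_{\frac{1}{A_{m}}}(p,q)<\rho_{A_{m}}(p,q)$, apply Lemma \ref{L1.7} to $A_{m}$ (or to $F$ in the subcase $\tau_{F}(p,q)>\tau_{A_{m}}(p,q)$, where you switch to the undivided inequality $T(r,F)\leq\sum_{j}T(r,A_{j})+(2k+1)T(2r,f)+O(1)$ exactly as the paper does), and read off the conclusion on $S_{2}\setminus S_{1}$. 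Your treatment of case (ii) is in fact written more carefully than the paper's, but it is the same underlying order-comparison argument.
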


\begin{proof}
Let $f$ be a meromorphic solution of (\ref{1nh}). We divide (\ref{1nh}) by $%
f(z+c_{m})~$and we get%
\begin{equation}
-A_{m}(z)=\sum_{j=0,j\neq m}^{k}\frac{A_{j}(z)f(z+c_{j})}{f(z+c_{m})}-\frac{%
F(z)}{f(z+c_{m})}.  \label{d1}
\end{equation}

Now from Lemma \ref{L1.5A} and Lemma \ref{L1.6}, for any $\varepsilon >0$ we
get,%
\begin{eqnarray*}
m\left( r,\frac{f(z+c_{j})}{f(z+c_{m})}\right) &\leq &o\left( \frac{\left\{
T(r+3C,f)\right\} ^{1+\varepsilon }}{r^{\delta }}\right) ~,j=0,1,\ldots
,k,~j\neq m, \\
\text{and }m\left( r,\frac{1}{f(z+c_{m})}\right) &\leq &T\left( r,\frac{1}{%
f(z+c_{m})}\right) \\
&=&T\left( r,f(z+c_{m})\right) +O(1)\leq (1+O(1))T\left( r+C,f\right) .
\end{eqnarray*}%
where $C=\max_{0\leq \leq k}\{\left\vert c_{j}\right\vert :j=0,1,\ldots
,k\},~r\notin S_{1}$ where $S_{1}$ is chosen as Lemma \ref{L1.6}.

Using these from (\ref{d1}) we get$,$ 
\begin{eqnarray}
T(r,A_{m}) &=&m(r,A_{m})+N(r,A_{m})  \notag \\
&\leq &\sum_{j=0,j\neq m}^{k}m(r,A_{j})+\sum_{j=0,j\neq m}^{k}m\left( r,%
\frac{f(z+c_{j})}{f(z+c_{m})}\right) +m(r,F)  \notag \\
&&+m\left( r,\frac{1}{f(z+c_{m})}\right) +N(r,A_{m})+O(1)  \notag \\
&\leq &\sum_{j=0,j\neq m}^{k}T(r,A_{j})+o\left( \frac{\left\{
T(r+3C,f)\right\} ^{1+\varepsilon }}{r^{\delta }}\right) +T(r,F)  \notag \\
&&+(1+O(1))T\left( r+C,f\right) +N(r,A_{m})+O(1)  \notag \\
&\leq &\sum_{j=0,j\neq m}^{k}T(r,A_{j})+3\left\{ T(2r,f)\right\}
^{2}+T(r,F)+N(r,A_{m})+O(1)  \label{d2}
\end{eqnarray}%
for $r\notin S_{1}.$

Now denote,$~\rho =\rho _{A_{m}}\left( p,q\right) ,~\rho _{1}=\max \{\rho
_{A_{j}}\left( p,q\right) :j=0,1,\ldots ,k,~\rho _{A_{j}}\left( p,q\right)
<\rho \},~\tau =\tau _{A_{m}}\left( p,q\right) $ and $\tau _{1}=\max \{\tau
_{A_{j}}\left( p,q\right) :j=0,1,\ldots ,k,~j\neq m,~\rho _{A_{j}}\left(
p,q\right) =\rho \}.$

Then clearly%
\begin{equation*}
\rho _{1}<\rho \text{ and }\tau _{1}<\tau ~
\end{equation*}%
by the given hypothesis.

From Lemma \ref{1.7} for that $\varepsilon ~$we have%
\begin{equation}
T(r,A_{m})>\exp _{p-1}\left\{ (\tau -\varepsilon )(\log _{q-1}r)^{\rho
}\right\}  \label{d3}
\end{equation}%
for sufficiently large $r$ with $r\in S_{2},~$where $S_{2}$ is a set with
infinite logarithmic measure.

Again if for some $j,~\rho _{A_{j}}\left( p,q\right) <\rho ,$ then for that $%
\varepsilon ~$we have%
\begin{equation}
T(r,A_{j})\leq \exp _{p-1}\left\{ (\log _{q-1}r)^{\rho _{1}+\varepsilon
}\right\}  \label{d4}
\end{equation}%
for sufficiently large $r.$

And for those $j,~j\neq m$ and $\rho _{A_{j}}\left( p,q\right) =\rho ,$ for
that $\varepsilon ~$we have%
\begin{equation}
T(r,A_{j})\leq \exp _{p-1}\left\{ (\tau _{1}+\varepsilon )(\log
_{q-1}r)^{\rho }\right\}  \label{d5}
\end{equation}%
for sufficiently large $r.$

Again by the definition of $\lambda _{\frac{1}{A_{m}}}\left( p,q\right) ,$
we have for the above $\varepsilon $ and for sufficiently large $r$%
\begin{equation}
N(r,A_{m})\leq \exp _{p-1}\left\{ (\log _{q-1}r)^{\lambda +\varepsilon
}\right\}  \label{d6}
\end{equation}%
taking $\lambda _{\frac{1}{A_{m}}}\left( p,q\right) =\lambda .$

Now for the proof of the first part, we take $\rho _{F}\left( p,q\right)
<\rho ,~$then for that $\varepsilon ~$we have%
\begin{equation}
T(r,F)\leq \exp _{p-1}\left\{ (\log _{q-1}r)^{\rho _{F}\left( p,q\right)
+\varepsilon }\right\}  \label{d7}
\end{equation}%
for sufficiently large $r.$

Now, using all the above relations (\ref{d3})-(\ref{d7}) and chosing $%
\varepsilon $ such that $0<\varepsilon <\frac{1}{2}\min \left\{ \rho -\rho
_{1},\tau -\tau _{1},\rho -\lambda ,\rho -\rho _{F}\left( p,q\right)
\right\} ,$ we have from (\ref{d2})%
\begin{gather*}
\exp _{p-1}\left\{ (\tau -\varepsilon )(\log _{q-1}r)^{\rho }\right\}
<O\left( \exp _{p-1}\left\{ (\log _{q-1}r)^{\rho _{1}+\varepsilon }\right\}
\right) +O\left( \exp _{p-1}\left\{ (\tau _{1}+\varepsilon )(\log
_{q-1}r)^{\rho }\right\} \right) \\
+3\left\{ T(2r,f)\right\} ^{2}+\exp _{p-1}\left\{ (\log _{q-1}r)^{\rho
_{F}\left( p,q\right) +\varepsilon }\right\} +\exp _{p-1}\left\{ (\log
_{q-1}r)^{\lambda +\varepsilon }\right\} +O(1)
\end{gather*}%
\begin{equation*}
\Rightarrow 3\left\{ T(2r,f)\right\} ^{2}>O\left( \exp _{p-1}\left\{ (\log
_{q-1}r)^{\rho +\varepsilon }\right\} \right)
\end{equation*}%
for sufficiently large $r$ and $r\in S_{2}\backslash S_{1}.$

Which implies, $\rho _{f}\left( p,q\right) \geq \rho .$

Next we suppose that $\rho _{F}\left( p,q\right) =\rho $ and $\tau
_{F}\left( p,q\right) <\tau ,~$then for the above $\varepsilon ~$we have%
\begin{equation}
T(r,F)\leq \exp _{p-1}\left\{ (\tau _{F}\left( p,q\right) +\varepsilon
)(\log _{q-1}r)^{\rho }\right\}  \label{d8}
\end{equation}%
for sufficiently large $r.$

Now, using relations (\ref{d3})-(\ref{d6}) and (\ref{1nh}) and chosing $%
\varepsilon $ such that $0<\varepsilon <\frac{1}{2}\min \left\{ \rho -\rho
_{1},\tau -\tau _{1},\rho -\lambda ,\tau -\tau _{F}\left( p,q\right)
\right\} ,$ we have from (\ref{d2})%
\begin{eqnarray*}
\exp _{p-1}\left\{ (\tau -\varepsilon )(\log _{q-1}r)^{\rho }\right\}
&<&O\left( \exp _{p-1}\left\{ (\log _{q-1}r)^{\rho _{1}+\varepsilon
}\right\} \right) \\
&&+O\left( \exp _{p-1}\left\{ (\tau _{1}+\varepsilon )(\log _{q-1}r)^{\rho
}\right\} \right) +3\left\{ T(2r,f)\right\} ^{2} \\
&&+\exp _{p-1}\left\{ (\tau _{F}\left( p,q\right) +\varepsilon )(\log
_{q-1}r)^{\rho }\right\} +\exp _{p-1}\left\{ (\log _{q-1}r)^{\lambda
+\varepsilon }\right\} +O(1),
\end{eqnarray*}%
\begin{equation*}
\Rightarrow 3\left\{ T(2r,f)\right\} ^{2}>O\left( \exp _{p-1}\left\{ (\log
_{q-1}r)^{\rho +\varepsilon }\right\} \right)
\end{equation*}%
for sufficiently large $r$ and $r\in S_{2}\backslash S_{1}.$

Which implies, $\rho _{f}\left( p,q\right) \geq \rho .$

For the last case of first part we take, $\rho _{F}\left( p,q\right) =\rho $
and $\tau _{F}\left( p,q\right) >\tau ,$ then by Lemma \ref{1.7}, and for
the above $\varepsilon ~$we have%
\begin{equation}
T(r,F)>\exp _{p-1}\left\{ (\tau _{F}\left( p,q\right) -\varepsilon )(\log
_{q-1}r)^{\rho }\right\}  \label{d9}
\end{equation}%
for sufficiently large $r$ and $r\in S_{3},~$where $S_{3}$ is chosen as
Lemma \ref{1.7}.

Again by the definition of $\tau _{A_{m}}\left( p,q\right) $ $,$ we have for
the above $\varepsilon $ and for sufficiently large $r$%
\begin{equation}
T(r,A_{m})\leq \exp _{p-1}\left\{ (\tau +\varepsilon )(\log _{q-1}r)^{\rho
}\right\} .  \label{d10}
\end{equation}

Now from (\ref{1nh}) and Lemma \ref{L1.5A} it follows that%
\begin{equation}
T(r,F)\leq \sum_{j=0,j\neq m}^{k}T(r,A_{j})+T(r,A_{m})+(k+2)T(2r,f)
\label{d11}
\end{equation}%
for sufficiently large $r$

Now, using relations (\ref{d3}), (\ref{d4}) and (\ref{d9})-(\ref{d11}) and
choosing $\varepsilon $ such that $0<\varepsilon <\frac{1}{2}\min \left\{
\rho -\rho _{1},\tau -\tau _{1},\tau _{F}\left( p,q\right) -\tau \right\} ,$
we have 
\begin{eqnarray*}
\exp _{p-1}\left\{ (\tau _{F}\left( p,q\right) -\varepsilon )(\log
_{q-1}r)^{\rho }\right\} &<&O\left( \exp _{p-1}\left\{ (\log _{q-1}r)^{\rho
_{1}+\varepsilon }\right\} \right) +O\left( \exp _{p-1}\left\{ (\tau
_{1}+\varepsilon )(\log _{q-1}r)^{\rho }\right\} \right) \\
&&+O\left( \exp _{p-1}\left\{ (\tau +\varepsilon )(\log _{q-1}r)^{\rho
}\right\} \right) +(k+2)T(2r,f)
\end{eqnarray*}%
\begin{equation*}
\Rightarrow (k+2)T(2r,f)>O\left( \exp _{p-1}\left\{ (\log _{q-1}r)^{\rho
+\varepsilon }\right\} \right)
\end{equation*}%
for sufficiently large $r$ and $r\in S_{3}\backslash S_{1}.$

It follows that, $\rho _{f}\left( p,q\right) \geq \rho .$

For the second part of the theorem, we take $\rho _{F}\left( p,q\right)
>\rho =\rho _{A_{m}}\left( p,q\right) $.

If possible we suppose that $\rho _{f}\left( p,q\right) <\rho _{F}\left(
p,q\right) ,$ then from (\ref{1nh}) we get%
\begin{equation*}
\rho _{q}^{p}A_{k}(z)f(z+c_{k})+A_{k-1}(z)f(z+c_{k-1})+\cdots
+A_{1}(z)f(z+c_{1})+A_{0}(z)f(z)<\rho _{q}^{p}\left( F(z)\right)
\end{equation*}%
which is a contradiction.

Hence we have $\rho _{f}\left( p,q\right) \geq \rho _{F}\left( p,q\right) .$
\end{proof}

\bigskip

\end{document}